\numberwithin{equation}{section}
\newcommand{\la}{\lambda}
\newcommand{\vfi}{\varphi}
\newcommand{\eps}{\varepsilon}
\newcommand{\tiv}{\tilde{v}}
\newcommand{\tiz}{\tilde{z}}
\newcommand{\tiw}{\tilde{w}}
\newcommand{\tiu}{\tilde{u}}
\newcommand{\tivfi}{\tilde{\vfi_{1}}}
\newcommand{\dys}{\displaystyle}
\newcommand{\ov}{\overline}
\def\R{\mathbb{R}}
\def\sob{H^{1}(\mathC)}
\def\huno{\mathcal{H}^{1}(\mathC)}
\newcommand{\elle}[1]{\mathcal{L}^{#1}(\Omega)}
\def\sobmezzo{H^{1/2}(\Omega)}
\def\hmezzo{\mathcal{H}^{1/2}(\Omega)}
\def\acca{\mathcal{H}}
\def\mathT{\mathcal{T}}
\def\mathS{\mathcal{S}}
\newcommand{\from}{\colon}
\newcommand{\slashint}{\int}
\newcommand{\intm}{\slashint}
\newcommand{\intc}{\int_{\mathC}}
\newcommand{\into}{\int_{\Omega}}
\newcommand{\pa}{\partial}
\newcommand{\mathC}{\mathcal{C}}
\newcommand{\tr}{\mathrm{tr}}
\newcommand{\lapneu}{(-\Delta_{\mathcal N})^{1/2}}
\newcommand{\mezzolap}{(-\Delta)^{1/2}}
\newtheorem{theorem}{Theorem}[section]
\newtheorem{remark}[theorem]{Remark}
\newtheorem{lemma}[theorem]{Lemma}
\newtheorem{proposition}[theorem]{Proposition}
\newtheorem{definition}[theorem]{Definition}
\newtheorem{corollary}[theorem]{Corollary}
\newcommand{\bos}{\begin{remark}\rm}
\newcommand{\eos}{\end{remark}}
\newcommand{\bte}{\begin{theorem}}
\newcommand{\ete}{\end{theorem}}
\newcommand{\bpr}{\begin{proposition}}
\newcommand{\epr}{\end{proposition}}
\newcommand{\ble}{\begin{lemma}}
\newcommand{\ele}{\end{lemma}}
\newcommand{\bdf}{\begin{definition}}
\newcommand{\edf}{\end{definition}}
\newcommand{\bdm}{\begin{displaymath}}
\newcommand{\edm}{\end{displaymath}}
\newcommand{\beq}{\begin{equation}}
\newcommand{\eeq}{\end{equation}}
\newcommand{\bdim}{\begin{proof}}
\newcommand{\edim}{\end{proof}}
\newcommand{\dyle}{\displaystyle}
\begin{document}

\title{Fractional diffusion with\\ Neumann boundary conditions:\\
the logistic equation}
\author{Eugenio Montefusco, Benedetta Pellacci \& Gianmaria Verzini}
\date{\today}
\maketitle

\begin{abstract}
Motivated by experimental studies on the anomalous diffusion of  biological populations, we introduce a nonlocal differential operator which can be interpreted as the spectral square root of the Laplacian in bounded domains with Neumann homogeneous boundary conditions. Moreover, we study related linear and nonlinear problems exploiting a local realization of such operator as performed in \cite{cata} for Dirichlet homogeneous data. In particular we tackle a class of nonautonomous nonlinearities of logistic type, proving some existence and uniqueness results for positive solutions by means of variational methods and bifurcation theory.
\end{abstract}

\section{Introduction}
Nonlocal operators, and notably fractional ones, are a classical topic
in harmonic analysis and operator theory, and they are recently becoming
impressively popular because of their connection with many real-world phenomena,
from physics \cite{mekl,GG,NT} to mathematical nonlinear analysis \cite{ABS,SV},
from finance \cite{A,CT} to ecology \cite{caro,rerh,Hu,beroro}.
A typical example in this context is provided by
L\'evy flights in ecology: optimal search
theory predicts that predators should adopt search strategies
based on long jumps --frequently called L\'evy flights-- 
where prey is sparse and distributed unpredictably,
Brownian motion being more efficient only for locating abundant
prey (see \cite{shzakl,Vo,Hu}).   As the dynamic of a population
dispersing via random walk is well described by a local
operator --typically the Laplacian-- L\'evy diffusion processes are generated
by fractional powers of the Laplacian $(-\Delta)^{s}$
for $s\in (0,1)$  in all $\R^{N}$. These operators in $\R^{N}$
can be defined equivalently in
different ways, all of them enlightening their nonlocal nature,
but, as shown in \cite{casi} and \cite{casasi},
they admit also local realizations: the fractional Laplacian
of a given function $u$ corresponds to the
Dirichlet to Neumann map of a suitable extension of $u$ to $\R^{N}\times[0,+\infty)$.
On the contrary, on bounded domains, different not equivalent definitions are available
(see e.g. \cite{guma,anmaro,cata} and references therein). This variety reflects
the different ways in which the boundary conditions can be understood in the
definition of the nonlocal operator. In particular, we wish to mention the recent
paper by Cabr\'e and Tan \cite{cata}, where the operator $\mezzolap$
on a bounded domain $\Omega\subset\R^N$ and associated to homogenous
Di\-ri\-chlet boundary conditions is defined  by Fourier series,
using a basis of corresponding eigenfunctions of $-\Delta$. Their point
of view allows to recover also in the case of a bounded domain the aforementioned 
local realization: indeed, interpreting $\Omega=\Omega\times\{0\}$ as a part
of the boundary of the cylinder $\Omega\times(0,+\infty)\subset \R^{N+1}$, the Dirichlet spectral 
square root of the Laplacian coincides with the Dirichlet to Neumann map for 
functions which are harmonic in the cylinder and zero on its lateral surface.
These arguments can be extended also to different powers
of $-\Delta$, see \cite{cadadusi}.
On the other hand, in  population dynamic, Neumann
boundary data are as natural as Dirichlet ones, as they represent
a boundary acting as a perfect barrier for the population.
The aim of this paper is then to provide a first contribution in the study
of the spectral square root of the Laplacian with Neumann boundary conditions.
\\
Inspired by \cite{cata}, our first goal is to provide
a formulation of the problem
\beq\label{linear}
\begin{cases}
(-\Delta)^{1/2} u= f & \hbox{in } \Omega, \\
\pa_{\nu} u=0 & \hbox{on } \pa\Omega,
\end{cases}
\eeq
where $\Omega$ is a $C^{2,\alpha}$ bounded domain
in $\R^{N}$, $N\geq 1$, and $f$ can be thought, for instance, as an $L^{2}(\Omega)$
function. To this aim, let us denote with $\left\{\phi_{k}\right\}_{k\geq 0}$ an
orthonormal basis in $L^2(\Omega)$ formed by eigenfunctions
associated to eigenvalues $\mu_{k}$
of the  Laplace operator subjected to homogenous Neumann
boundary conditions, that is
\beq\label{autof}
\begin{cases}
-\Delta \phi_{k}=\mu_{k}\phi_{k} & \hbox{ in }\Omega, \\
\partial_{\nu}\phi_{k}=0 & \hbox{ on }\pa\Omega.
\end{cases}
\eeq
We can define the operator $(-\Delta)^{1/2} \from H^{1}(\Omega)\to
L^{2}(\Omega))$
 by
\beq\label{deflap}
(-\Delta)^{1/2} u=\sum_{k=1}^{+\infty}\mu_{k}^{1/2}u_{k}\phi_{k}
\quad \hbox{for $u$ given by} \quad
u=\sum_{k=0}^{+\infty}u_{k}\phi_{k}.
\eeq
The first series in \eqref{deflap} starts from $k=1$
since the first eigenvalue and the cor\-re\-spon\-ding
eigenfunction in \eqref{autof} are given by $(\mu_{0},\phi_{0})=(0,1/\sqrt{|\Omega|})$.
This simple difference with the Laplacian subjected to homogeneous
Dirichlet boun\-da\-ry conditions has considerable effects.
First of all, this implies that $(-\Delta)^{1/2}$, as the usual Neumann Laplacian,
has a nontrivial kernel made of the constant functions,
then it is not an invertible operator and \eqref{linear}
cannot be solved without  imposing additional conditions
on the datum $f$; on the other hand, given any $u$ defined on $\Omega$, its 
harmonic extension on ${\mathcal C}:=\Omega\times (0,+\infty)$ having zero
normal derivative on the lateral surface needs not to belong to any Sobolev
space, as constant functions show. These features has to be taken into
account when establishing the functional framework
where to set the variational formulation of  \eqref{linear}.
In this direction, we will first provide a proper
interpretation of \eqref{linear}, and a corresponding local realization, in the zero mean setting.
To this aim, let us introduce the space of functions defined in the cylinder ${\mathcal C}$
$$
\huno:= \left\{ v\in\sob:\, \int_{\Omega} v(x,y)\, dx=0, \; \forall
\, y\in(0,+\infty) \right\}.
$$
An easy application of the Poincar\'e-Wirtinger inequality
shows that we can choose as a norm of $v\in\huno$  the $L^{2}$ norm
of the gradient of $v$ (see Proposition \ref{propo:mediacontinua}
and Lemma \ref{normahuno}).
It comes out that, when the datum $f$ has zero mean, a possible solution
of \eqref{linear} is the trace of a function belonging to
$\huno$.  The corresponding space of traces can be equivalently defined
in different ways, since Proposition \ref{prop:H1/2} shows
that
\begin{align*}
 \hmezzo &:= \left\{ u\in\sobmezzo:\, \int_{\Omega} u(x)\, dx=0\right\}
=\left\{ u=v(x,0):\, v\in\huno\right\}
\\
&=\left\{ u\in L^{2}(\Omega) :\, u=\sum_{k=1}^{+\infty} u_{k}\phi_{k}
\hbox{ such that }
\sum_{k=1}^{+\infty} \mu^{1/2}_{k}u^{2}_{k}<+\infty \right\}.
\end{align*}
In proving this result, one obtains that every $u\in \hmezzo$
has an harmonic extension $v\in\huno$ given by
\beq\label{armointro}
v(x,y)=\sum_{k=1}^{+\infty} u_{k}\phi_{k}(x)e^{-\mu^{1/2}_k y},
\qquad\text{for $(\mu_{k},\phi_{k})$ solving \eqref{autof}},
\eeq
and which is also the unique weak solution of the problem
\beq\label{pbestintro}
\begin{cases}
\Delta v=0 &\text{ in } \mathC,
\\
\pa_{\nu} v=0 &\text{ on } \pa \Omega\times (0,+\infty),
\\
v(x,0)=u(x) &\text{ on } \Omega.
\end{cases}
\eeq
Thus, given $u\in\hmezzo$ we can find a unique $v\in \huno$ solving
\eqref{pbestintro}, for which it is well defined the functional acting on
$\sobmezzo$ as
$$
\langle -\partial_y v(\cdot,0), g \rangle := \int_{\mathC} \nabla v \cdot \nabla \tilde g\,dxdy,
$$
where $\tilde g$ is any $H^{1}({\mathcal C})$ extension of $g$.
Since this functional is actually an ele\-ment of the dual
of $\hmezzo$, it is well defined the operator
$L_{1/2}(u)=-\partial_{y}v(\cdot,0)$ between $\hmezzo$ and its dual.
Thus, restricting the study to the zero mean function spaces, and
taking into account equations \eqref{deflap} and \eqref{armointro},
we have that $L_{1/2}$ conincides with
$(-\Delta)^{1/2}$,  but it is invertible: for every
$f$ in the dual space of $\hmezzo$ there exists a unique
$u\in\hmezzo$ such that
$L_{1/2} u=f$, and
this function $u$ is the trace on $\Omega$
of the unique solution $v\in \huno$ of the problem
(see Lemma \ref{lem:weak-v})
\beq\label{linvintro}
\begin{cases}
\Delta v=0 & \hbox{in }\Omega\times(0,+\infty), \\
\pa_{\nu} v=0 & \hbox{on } \pa\Omega\times(0,+\infty),\\
\pa_{\nu} v(x,0)= f(x) & \hbox{on } \Omega.
\end{cases}
\eeq
The link between $L_{1/2}$ and $(-\Delta)^{1/2}$ now becomes
 transparent since
$$
(-\Delta)^{1/2}(u)=L_{1/2}\left(u-\slashint u\right)
$$
that is, the image of a function $u$ trough $(-\Delta)^{1/2}$
is the same of the one yield by $L_{1/2}$ acting on the zero
mean component of $u$ (see Definition \ref{df:lmezzo}).
In this way we have recovered the local realization
of $(-\Delta)^{1/2}$ as a map Dirichlet-Neumann since
$$
(-\Delta)^{1/2}u=L_{1/2}\left(u-\slashint u\right)=
-\partial_{y}\tiv(x,0)=\partial_{\nu}\tiv
$$
where $\tiv$ solves (1.5) with Dirichlet datum $\tiu=u-\slashint u$
instead of $u$.
Therefore, if $f$ has zero mean, 
denoting with $\tiu(x)=\tiv(x,0)$ the unique solution of
$L_{1/2}\tiu={f}$ then the solutions set
of \eqref{linear} is given by $\tiu+h$ for $h\in \R$.
\\
Since we are interested in ecological applications, as a first study we focus
our attention on the logistic equation.
More precisely, consider a population dispersing via the above defined anomalous diffusion
in a bounded region $\Omega$, with Neumann boundary
conditions, growing logistically within the region;
then $u$, the population density, solves the diffusive equation
$$
\begin{cases}
u_{t}=d\mezzolap u(x,t)+u(x,t)(m(x)-u(x,t)) & \text{in } \Omega\times (0,+\infty),
\\
\partial_{\nu} u=0 & \text{on } \partial \Omega\times (0,+\infty),
\\
u(x,0)=u_{0}(x)& \text{in } \Omega,
\end{cases}
$$
where $d>0$ acts as a diffusion coefficient,
 the term $-u^{2}$ express the self-limi\-ta\-tion of the population
and $m\in C^{0,1}(\ov{\Omega})$ corresponds to
the birth rate of the population if self-limitation is ignored. The weight $m$
may be positive or negative in different regions, denoting favorable or hostile habitat, 
respectively. The stationary states of this equation are the solutions of the following
nonlinear problem
\beq
\begin{cases}\label{logim}
(-\Delta)^{1/2}u=\la u(m(x)-u) & \text{in } \Omega,
\\
\partial_{\nu} u=0 & \text{on } \partial \Omega,
\end{cases}
\eeq
where $\lambda=1/d>0$.
When the diffusion follows the rules of the Brownian motion
this model has been introduced in \cite{sk} and studied by
many authors (see \cite{caco2} and the references therein).
One of the major task in this problem is describing
how favorable  and unfavorable habitats, represented by
the interaction between $\la$ and $m$, affects the overall
suitability of an environment for a given populations
\cite{caco}.
The typical known facts for the stationary problem associated to
Brownian motion can be summarized as follows:
\begin{theorem}[\cite{he,um}]\label{eslap}
 i) If the function $m$ has negative mean inside $\Omega$ and it
is positive somewhere, then there exists a positive number
$\overline{\mu}_{1}$ such
that for every $\la>\overline{\mu}_{1}$ there exists a unique positive solution $u_{\la}$
\beq\label{deltalog}
\begin{cases}
-\Delta u=\la u(m(x)-u) & \text{in } \Omega,
\\
\partial_{\nu} u=0 & \text{on } \partial \Omega,
\end{cases}
\eeq
and $u_{\la}\to 0$ as $\la\to \overline{\mu}_{1}^{+}$.
\\
ii) If $m$ has nonnegative average, then for every $\la>0$
there exists a unique positive solution $u_{\la}$ of \eqref{deltalog}
and $u_{\la}\to h^{*}$
as $\la\to 0^{+}$, for $h^{*}$ expressed by
\beq\label{defhintro}
h^{*}=\slashint m(x)dx=\frac1{|\Omega|}\into m(x)dx.
\eeq
\end{theorem}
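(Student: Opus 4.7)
My plan is to treat both parts uniformly by regarding $\lambda$ as a bifurcation parameter and locating the branch of positive solutions emanating from the trivial one. The natural threshold is the principal eigenvalue $\overline{\mu}_{1}$ of the weighted Neumann problem
\[
-\Delta \varphi = \mu\, m(x)\varphi \quad\text{in }\Omega, \qquad \partial_\nu \varphi = 0 \quad\text{on }\partial\Omega,
\]
which can be characterized variationally via the Rayleigh quotient $\int_\Omega |\nabla \varphi|^2 / \int_\Omega m\varphi^2$ restricted to functions with $\int_\Omega m\varphi^2>0$. A first, mainly spectral step is to show that $\overline{\mu}_1>0$ exactly when $m$ has negative mean and is somewhere positive (the constant test function gives a nonpositive weighted Rayleigh numerator, so the infimum sits strictly above the trivial value), while $\overline{\mu}_1 = 0$ is achieved by the constants when $\int_\Omega m \geq 0$. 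I would simultaneously verify that $\overline{\mu}_1$, when positive, is simple with a strictly positive eigenfunction $\varphi_1$.

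For existence, I would use the method of sub- and super-solutions. A super-solution is trivial: any constant $M\geq \|m\|_\infty$ satisfies $-\Delta M = 0 \geq \lambda M(m-M)$. For the sub-solution I would take $\underline{u} = \varepsilon \varphi_1$ for $\varepsilon>0$ small: inserting it in the equation gives
\[
-\Delta(\varepsilon\varphi_1) - \lambda \varepsilon\varphi_1(m - \varepsilon\varphi_1) = \varepsilon\varphi_1\bigl[(\overline{\mu}_1 - \lambda)m + \lambda\varepsilon\varphi_1\bigr],
\]
which is nonpositive provided $\lambda > \overline{\mu}_1$ (for part (i)) and $\varepsilon$ is chosen small. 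Hence a positive solution exists by monotone iteration between $\varepsilon\varphi_1$ and $M$. For part (ii) the same argument works directly with $\underline{u}=\varepsilon$ constant and $\overline{\mu}_1=0$, since any $\lambda>0$ is admissible. Non-existence below $\overline{\mu}_1$ in case (i) follows by testing the equation against $\varphi_1$, dividing by $u$ and integrating by parts, which produces the incompatible identity $(\overline{\mu}_1 - \lambda)\int m\varphi_1 u = \lambda \int \varphi_1 u^2 + \text{nonnegative gradient terms}$.

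The main obstacle is uniqueness, which I expect to prove by a Brezis--Oswald type monotonicity argument: for two positive solutions $u_1, u_2$ one uses $(u_1^2-u_2^2)/u_i$ as test functions and exploits the strict concavity of $s\mapsto s(m-s)/s = m-s$ to conclude that $u_1 \equiv u_2$. The necessary regularity $u\in C^{2,\alpha}(\overline{\Omega})$ and the strict positivity up to the boundary (Hopf lemma applied to the Neumann problem) provide the admissibility of this test function.

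Finally, for the asymptotics, in case (i) I would show that $u_\lambda \to 0$ as $\lambda \to \overline{\mu}_1^+$ by a rescaling argument: setting $w_\lambda = u_\lambda/\|u_\lambda\|_\infty$, standard elliptic estimates give compactness, and the limit $w$ solves $-\Delta w = \overline{\mu}_1 m w$, forcing $w=\varphi_1/\|\varphi_1\|_\infty$ and $\|u_\lambda\|_\infty \to 0$ (otherwise a positive limit $u_*$ would give a nontrivial solution at $\lambda = \overline{\mu}_1$, contradicting the nonexistence just established). In case (ii) with $\lambda \to 0^+$, the $L^\infty$ bound $u_\lambda \leq \|m\|_\infty$ together with testing against $\varphi\equiv 1$ yields $\int_\Omega u_\lambda(m - u_\lambda) = 0$; standard elliptic estimates force $u_\lambda$ to converge to a constant $h$, and passing to the limit in $\int u_\lambda (m - u_\lambda)=0$ gives $h(\overline{m} - h)|\Omega|=0$, whence $h = h^*$ by positivity.
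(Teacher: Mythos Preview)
First, note that the paper does not prove this theorem: it is quoted from the references [he,um] as background for the classical Laplacian, and the paper's own work concerns the fractional analogue (Theorem~\ref{esintro}), which is established via local bifurcation from a simple eigenvalue (Crandall--Rabinowitz) together with a continuation argument (Propositions~\ref{bifurcation1}, \ref{bifurcation2}, Lemma~\ref{prolungo}, Theorem~\ref{global}). So there is no ``paper's own proof'' of Theorem~\ref{eslap} to compare against; the closest object is the bifurcation argument for the fractional problem, which differs structurally from your sub/super-solution scheme.

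Your outline contains a genuine gap in the construction of the sub-solution. With $-\Delta\varphi_1=\overline{\mu}_1 m\varphi_1$ and $\varphi_1>0$, the quantity
\[
-\Delta(\varepsilon\varphi_1)-\lambda\varepsilon\varphi_1(m-\varepsilon\varphi_1)
=\varepsilon\varphi_1\bigl[(\overline{\mu}_1-\lambda)m+\lambda\varepsilon\varphi_1\bigr]
\]
is \emph{not} nonpositive on the set $\{m<0\}$: there $(\overline{\mu}_1-\lambda)m>0$ and $\lambda\varepsilon\varphi_1>0$, so the bracket is strictly positive regardless of how small $\varepsilon$ is. The same failure occurs for the constant sub-solution $\underline{u}=\varepsilon$ in part~(ii) wherever $m<0$ (which is allowed, since only the \emph{average} of $m$ is assumed nonnegative). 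Thus neither sub-solution is valid when the weight is indefinite, which is precisely the interesting case here.

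The standard remedy, if you wish to stay within the sub/super-solution framework, is to replace the weighted eigenpair by the principal eigenpair $(\sigma_1(\lambda),\psi_1)$ of
\[
-\Delta\psi-\lambda m(x)\psi=\sigma\psi\quad\text{in }\Omega,\qquad \partial_\nu\psi=0\quad\text{on }\partial\Omega,
\]
with $\psi_1>0$. One shows $\sigma_1(\lambda)<0$ exactly when $\lambda>\overline{\mu}_1$ (resp.\ $\lambda>0$ in case~(ii)); then $\varepsilon\psi_1$ is a genuine sub-solution because $-\Delta(\varepsilon\psi_1)-\lambda m\,\varepsilon\psi_1=\sigma_1\varepsilon\psi_1<0\le-\lambda\varepsilon^2\psi_1^2$ for $\varepsilon$ small. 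Alternatively, you can follow the bifurcation route the paper takes for the fractional operator, which sidesteps the sub-solution construction entirely. Your uniqueness sketch (Brezis--Oswald, using that $s\mapsto m(x)-s$ is strictly decreasing) and the asymptotic arguments are essentially correct once existence is secured.
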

The number $\overline{\mu}_{1}$ appearing in i) is the first
positive eigenvalue with positive
eigenfunction of the  operator $-\Delta$ with Neumann boundary condition
and with a weight $m$ satisfying the hypotheses in i).
\\
In our situation, we have, first of all,
to clarify that by a weak positive solution of \eqref{logim} we mean
a function $u\in H^{1/2}(\Omega)$, $u(x)>0$,
$u(x)=\tiu(x)+h$ with $h\in \R^{+}$ and $\tiu\in \hmezzo$,
so that $\tiu(x)=\tiv(x,0)$ for $\tiv\in\huno$ and
$(\tiv,h)\in \huno\times \R$ is a
weak solution of the nonlinear problem
\beq\label{nonlinprobintro}
\begin{cases}
\Delta \tiv=0 & \hbox{in }\Omega\times(0,+\infty),
\\
\pa_{\nu} \tiv=0 & \hbox{on } \pa\Omega\times(0,+\infty), \\
\pa_{\nu} \tiv= \la(\tiv+h)(m(x)-\tiv-h) & \hbox{on } \Omega\times\{0\},
\\
\dys \into \la(\tiv(x,0)+h)( m(x)-\tiv(x,0)-h)dx=0,
\end{cases}
\eeq
in the sense that
$$
\begin{cases}
\dys\int_{\mathC}\nabla \tiv\nabla \psi\,dxdy= \into\la(\tiv+h)(m(x)-\tiv-h)\psi\,dx
\qquad \forall\,\psi\in \huno,
\medskip\\
\dys\into \la(\tiv+h)(m(x)-\tiv-h)dx=0.
\end{cases}
$$
In other words, we impose that the right hand side
has zero mean, choosing, in this way, the mean of a solution $u$ as
$h=h_{u}$. Then we obtain the well posedeness of the problem
$$
\begin{cases}
L_{1/2}\tiu=\la (\tiu+h_{u})(m(x)-\tiu-h_{u}) &\hbox{in } \Omega,
\\
\partial_{\nu}\tiu=0 & \hbox{on } \pa\Omega,
\end{cases}
$$
since now the right hand side has zero mean, and
we obtain in this way the zero part mean of $u$.
Moreover, notice that the mean of the function $v(x,y)$ solution
of \eqref{nonlinprobintro} with $v(x,y)=\tiv+h_{u}$ and
$\tiv(x,0)=0$  is exactly the mean of $u$.
\\
Our main existence result is the following
\bte\label{esintro}
Let $m\in C^{0,1}(\ov{\Omega}),\, m\not\equiv 0$. Then the following conclusion hold:
\\
i) If $\into m(x)dx<0 $ and there exists $x_{0}\in \Omega$
such that $m(x_{0})>0$, then there exists a positive number
$\la_{1}$ such that for every $\la>\la_{1}$ there exists a unique positive solution $u_{\la}$ of \eqref{logim}, with $u_{\la}\to 0$ as
$\la\to \la_{1}^{+}$ .
\\
ii) If $\into m(x)dx\geq0$, then for every $\la>0$ there exists a unique
positive solution $u_{\la}$ of \eqref{logim} with $u_{\la}\to h^{*}$ for
$\la\to 0^{+}$.
\ete
As in the standard diffusion case, $\la_{1}$ is the first positive eigenvalue  with positive eigenfunction of the problem
$$
\begin{cases}
\mezzolap u=\la m(x)u &\hbox{in }\Omega,
\\
\partial_{\nu}u=0 &\hbox{on } \pa\Omega.
\end{cases}
$$
which existence is proved in Theorem \ref{primoautopos}. Theorem
\ref{esintro} will be obtained via classical bifurcation theory, indeed,
in case {\it i)}, we can show that a smooth
cartesian branch of positive solutions
bifurcates from the trivial solution $(\la,h,\tiu)=(\la_{1},0,0)$,
this branch can be
continued  in all the interval $(\la_{1},+\infty)$, and contains all the positive
solutions of 	\eqref{logim}, that is to say that for every $\la>0$ there exists a unique positive solution (see Proposition \ref{bifurcation1}, and
Theorem\ref{global}).  We tackle case {ii)} first assuming that
the mean of $m$ is positive.
This allows us choose as a bifurcation
parameter $h$, the future mean of $u$, instead of $\la$,
and find a branch bifurcating from the trivial solution
$(\la,h,\tiu)=(0,h^{*},0)$, with $h^{*}$ defined as
in \eqref{defhintro}.
As in the previous case we can show that this branch is global and contains
all the positive solutions (see Proposition \ref{bifurcation2}, and
Theorem\ref{global}).
Finally, we complete the proof of  case {\it ii)} by approximation in Theorem
\ref{media zero}.
\\
All the effort made in finding the proper formulation
for the linear and the nonlinear problem enables us to
prove the existence results for \eqref{logim}, which
are in accordance with the case of standard diffusion. But,
trying to  enlighten the differences between the two models,
one has to take care of the eigenvalues appearing in Theorems
\ref{eslap} and \ref{esintro}, that is $\overline{\mu}_{1}$ and
$\la_{1}$. Since such eigenvalues act as a survival threshold in hostile habitat, 
it is a natural question to wonder which is the lowest one, indeed
this indicates whether or not the fractional search strategy is preferable 
with respect to the brownian one. This appears to be a difficult question,
since the eigenvalues depend in a nontrivial way on $m$, and also on the sequence 
$(\mu_k)_k$ defined in \eqref{autof}. At the end of Section \ref{sec:logistica}
we report some simple numerical experiments to hint such complexity. 

\section{Functional setting}\label{sec:funct_setting}
In this section we will introduce the functional spaces where the spectral
Laplacian associated to homogeneous Neumann boundary
conditions will be defined.
Moreover, we will study the main properties of this operator and
find the proper conditions under which the inverse operator is well
defined. Finally, we will prove summability and regularity properties
enjoyed  by the solutions of the linear problem.
\\
Throughout the paper $\Omega$ is a $C^{2,\alpha}$ bounded domain and
we will use the notation ${\mathcal C}=\Omega\times (0,+\infty)$.
\\
In this plan we will make use of the following projections operators.
\begin{definition}\rm
Let us define the operators
$ A_\mathC,\, Z_\mathC \from \sob \to \sob $ by
\[
A_\mathC v :=\slashint_\Omega v(x,\cdot)\,dx=\frac1{|\Omega|}\into
v(x,\cdot)dx,\qquad Z_\mathC v := v- A_\mathC v,
\]
for $|\Omega|$ denoting the Lebesgue measure of the domain $\Omega$.
$A_{\mathC}$ and $Z_{\mathC}$
give the ave\-rage (with respect to $x$) and the zero-averaged part of
a function $v$, respectively. Analogously, for $u\in H^{1/2}(\Omega)$, we write
\beq\label{zomega}
A_\Omega u :=\slashint_\Omega u(x)\,dx,\qquad Z_\Omega u :=u - A_\Omega u.
\eeq
When no confusion is possible, we drop the subscript
in $A$, $Z$.
\end{definition}
It is standard to prove that, in both cases, $A$ and $Z$ are linear and continuous,
and that $\tr_\Omega \circ Z_\mathC = Z_\Omega \circ \tr_\Omega$. Since the
integration in the definition  of $A_{\mathC}$ is performed
only with respect to the $x$ variable, it is natural to interpret
the image of a function $v$ through the operator $A_{\mathC}$ as a function of one variable.
$A_{\mathC}v(y)$ enjoys the following properties.
\bpr\label{propo:mediacontinua}
If $v\in\sob$ then $A_\mathC{v}\in H^{1}(0,+\infty)$.
In particular, it is a continuous function up to $0^+$,
and it vanishes as $y$ tends to infinity.
\epr
\bdim
Since $\pa_{y}v(\cdot, y)\in L^{2}(\Omega)$ for almost every
$y$, we can compute
$(A_\mathC v)'(y)$ and obtain, by H\"older's ine\-quality,
\begin{align*}
\int_0^{+\infty} \left((A_\mathC v)'\right)^2\,dy
&= \int_0^{+\infty} \frac{1}{|\Omega|^2}\left(\into \pa_{y}v\,dx\right)^2\,dy
\\
&\leq \int_0^{+\infty}\frac{1}{|\Omega|} \left(\into |\pa_{y}v|^2\,dx\right)\,dy<+\infty.
\end{align*}
As a consequence, $A_\mathC{v} \in H^1(0,\infty)$,
so that it is continuous in $y$ and it vanishes as $y$ tends to $+\infty$.
\end{proof}
Introducing  the following functional spaces
\beq\label{defhuno}
\begin{split}
\huno&:=\mathrm{Ker}A_\mathC = \left\{ v\in\sob:\, \int_{\Omega} v(x,y)\, dx=0, \; \forall
\, y\in(0,+\infty) \right\},\\
\hmezzo &:=\mathrm{Ker}A_\Omega = \left\{ u\in\sobmezzo:\, \int_{\Omega} u(x)\, dx=0\right\},
\end{split}
\eeq
it is worth noticing that the former is  well defined by Proposition \ref{propo:mediacontinua}. Moreover, we can choose
as a norm on $\huno$ the quantity
\begin{equation}\label{normacal}
\|v\|_{\huno}^{2}:=\|\nabla v\|_{L^{2}}^{2}
\end{equation}
as it is equivalent to the $H^1$-norm thanks to the following lemma.
\ble\label{normahuno}
There exists a positive constant $K$ such that for every $v\in\huno$
it holds
\bdm
\|v\|_{L^{2}} \leq K \|\nabla v \|_{L^{2}}.
\edm
\ele

\bdim
We set $\nabla_{\!\! x} v=(\pa_{x_{1}}v,\dots,\pa_{x_{n}}v)$ and we notice that
for any $v\in\huno$ the Poincar\'e-Wirtinger inequality implies
\bdm
\begin{array}{rl}
\dys\|v\|_{L^{2}}^{2}& \dys=\int_{0}^{+\infty}dy \int_{\Omega} v^{2}(x,y) dx
\leq \int_{0}^{+\infty}\left( c_{pw} \int_{\Omega} |\nabla_{x} v(x,y)|^{2} dx\right)dy
\\ \\
&\dys \leq c_{pw} \intc |\nabla_{x} v(x,y)|^{2} dx dy
\leq c_{pw} \intc |\nabla v(x,y)|^{2} dx dy=K^{2} \|\nabla v \|_{L^{2}}^{2}
\end{array}
\edm
proving the claim.
\edim
The following proposition gives a complete description of the space $\hmezzo$.
\bpr\label{prop:H1/2}
Let $\hmezzo$ be defined in \eqref{defhuno}. Then
the following conclusions hold:
\begin{align*}
\text{(i)}\quad \hmezzo
&=\left\{ u=\mathrm{tr}_\Omega v:\, v\in\huno\right\}
\\
&=\left\{ u\in L^{2}(\Omega) :\, u=\sum_{k=1}^{+\infty} u_{k}\phi_{k}
\hbox{ such that }
\sum_{k=1}^{+\infty} \mu^{1/2}_{k}u^{2}_{k}<+\infty \right\};\qquad\qquad\quad
\end{align*}
\text{(ii)}$\quad  \hmezzo$  is an Hilbert space with the norm
$$
\|u\|_{\hmezzo}=\left\{\int_{\Omega}\int_{\Omega}
\frac{|u(x)-u(x')|^{2}}{|x-x'|^{N+1}}dxdx'\right\}^{1/2}
$$
equivalent to the usual one in $\sobmezzo$.
\epr
\bdim
Since $\Omega$ is of class $C^{2,\alpha}$, we have that $H^{1/2}(\Omega)$ can be equivalently characterized as $\left\{ u=\mathrm{tr}_\Omega v:\, v\in H^1(\mathC)\right\}$,
where we write tr$v= v|_\Omega = v (\cdot,0)$. Then Proposition
\ref{propo:mediacontinua} provides the inclusion
$$
\left\{ u=\mathrm{tr}_\Omega v:\, v\in\huno\right\} \subset \hmezzo.
$$
In order to show the opposite one, consider $u\in\hmezzo$ and consider
$v\in H^{1}(\mathC)$ such that $u=\mathrm{tr}_\Omega v$. Notice that
$Z_{\mathC}v\in \huno$ and Proposition \ref{propo:mediacontinua} implies  that
$$
\mathrm{tr}_\Omega (Z_{\mathC}v)=\mathrm{tr}_\Omega (v-A_{\mathC}v)
=u-\slashint_\Omega v(x,0)\,dx=u
$$
then we have found $\tilde{v}=Z_{\mathC}v$ belonging to $\huno$ and such
that  $u=\mathrm{tr}_\Omega (\tilde{v})$, yielding the
first equality in {\it (i)}.
As far as the second equality is concerned, we start by proving the inclusion
\[
\left\{ u=\mathrm{tr}_\Omega v:\, v\in\huno\right\} \subset
\left\{ u\in L^{2}(\Omega) :\, u=\sum_{k=1}^{+\infty} u_{k}\phi_{k}
\hbox{ such that }
\sum_{k=1}^{+\infty} \mu^{1/2}_{k}u^{2}_{k}
<+\infty \right\}.
\]
Indeed any $v\in\huno$ can be written as $v(x,y)=\sum_{k\geq 1} v_k(y) \phi_k(x)$,  with
$$
\|v\|^2_{\huno} = \int_0^{+\infty}\left( \sum_{k\geq 1} \mu_k v_k(y)^2 + v_k'(y)^2\right)\,dy
$$
then
$$
\sum_{k\geq 1} \mu_k v_k(y)^2 <+\infty \quad \text{a.e. in $ (0,+\infty).$}
$$
Let us fix $\bar{y}$  such that
$\dyle\sum_{k\geq 1} \mu_k^{1/2}v_k(\bar y)^2$ is finite and take
$u=\tr_\Omega v = \dyle\sum_{k\geq 1} v_k(0) \phi_k(x)$. We have
\[
\begin{split}
\|v\|_{\huno}^{2} \geq
\sum_{k\geq 1} \int_0^{\bar y}2\left| \mu_k^{1/2} v_k(y) v_k'(y)\right|\,dy
 \geq \left| \sum_{k\geq 1}\mu_k^{1/2} v_k(\bar y)^2 -
\sum_{k\geq 1}\mu_k^{1/2} v_k(0)^2 \right|,
\end{split}
\]
implying the desired inclusion. On the other hand, let
$\sum_{k\geq 1}\mu_k^{1/2} u_k^2<+\infty$, and let us define
\beq\label{armo}
v(x,y)=\sum_{k=1}^{+\infty} u_{k}\phi_{k}(x)e^{-\mu^{1/2}_k y}.
\eeq
It is a direct check to verify that $v\in \huno$ (see also Lemma 2.10 in
\cite{cata}), obtaining that all the equalities in {\it (i)} hold.

Let us now show conclusion (ii), starting with proving that there
exist constants $A,\,B$ such that
\beq\label{norme}
A\|u\|_{\sobmezzo}\leq \|u\|_{\hmezzo}\leq B\|u\|_{\sobmezzo}
\eeq
As
$$
\|u\|_{\sobmezzo}^{2}=\|u\|_{L^{2}}^{2}+\int_{\Omega}\int_{\Omega}
\frac{|u(x)-u(x')|^{2}}{|x-x'|^{N+1}}dxdx'.
$$
The right hand side inequality holds for $B=1$; in order to show
the left hand side inequality, let us argue by contradiction
and suppose that there exists a sequence $u_{n}\in \hmezzo$,
with $\|u_{n}\|_{L^{2}(\Omega)}=1$ and $\|u_{n}\|_{\hmezzo}\to 0$. Then $u_{n}$ is
uniformly bounded in $\sobmezzo$ and there exists $u\in \hmezzo$
such that $u_{n}$  converges to $u$ weakly in $\sobmezzo$ and
strongly in $L^{2}(\Omega)$ (notice that we do not know that the
quantity $\|\cdot\|_{\hmezzo}$ is a norm on $\hmezzo$). As a consequence, $\|u\|_{L^{2}(\Omega)}=1$ and
$$
\int_{\Omega}\int_{\Omega}\frac{|u(x)-u(x')|^{2}}{|x-x'|^{N+1}}dxdx'=0,
$$
which is an obvious contradiction.
As a byproduct of inequalities \eqref{norme} we obtain that
$\|\cdot\|_{\hmezzo}$ is a well defined norm and
since $\hmezzo$ is a closed subspace of $\sobmezzo$
with respect to the usual norm conclusion (ii) holds.
\edim
Carefully reading the proof of the second equality in (i) of
the previous proposition, one realizes that for any
$u\in \hmezzo$ we can construct a suitable extension $v\in \huno$
which is harmonic and that can be written in terms of a Fourier
expansion as shown in \eqref{armo}. In the next lemma
we provide a variational characterization of such extension.
\begin{lemma}\label{armonic_extension}
For every $u\in\hmezzo$ there exists an unique $v\in\huno$
achieving
\[
\inf\left\{ \|v\|_{\huno}^{2}=\int_\mathC |\nabla v(x,y)|^2\,dxdy
:\, v\in\huno,\,v(\cdot,0)=u\right\}.
\]
Moreover, the function $v$ is the unique (weak) solution of the problem
\beq\label{pbest}
\begin{cases}
\Delta v=0 &\text{ in } \mathC,
\\
\pa_{\nu} v=0 &\text{ on } \pa \Omega\times (0,+\infty),
\\
v(x,0)=u(x) &\text{ on } \Omega.
\end{cases}
\eeq
Finally,
\beq\label{est}
\hbox{if}\quad u(x)=\dyle\sum_{k=1}^{+\infty} u_{k}\phi_{k}(x)
\quad \text{then}\quad
v(x,y)=\sum_{k=1}^{+\infty} u_{k}\phi_{k}(x)e^{-\mu^{1/2}_k y}
\eeq
\end{lemma}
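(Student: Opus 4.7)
The plan is to solve the constrained minimization by the direct method, obtain \eqref{pbest} as the Euler-Lagrange equation, and then identify the minimizer with the series \eqref{est} via uniqueness.

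Consider the admissible set $\mathcal{A}_u := \{v \in \huno : v(\cdot,0) = u\}$. It is non-empty by Proposition \ref{prop:H1/2}(i) and, by continuity of the trace operator, is a closed affine subspace of $\huno$. Thanks to Lemma \ref{normahuno} and \eqref{normacal}, the Dirichlet energy $F(v) := \intc |\nabla v|^2\,dxdy$ equals the square of the $\huno$-norm, hence it is coercive, strictly convex, and weakly lower semicontinuous. A minimizing sequence is therefore bounded, admits a weak limit $v \in \mathcal{A}_u$ (closed convex subsets of Hilbert spaces being weakly closed), and this $v$ is the unique minimizer by strict convexity.

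Differentiating $F(v+t\psi)$ at $t=0$ along admissible variations $\psi \in \huno$ with $\psi(\cdot,0)=0$ produces the Euler-Lagrange identity
$$
\intc \nabla v\cdot\nabla\psi\,dxdy = 0, \qquad \forall\, \psi \in \huno \text{ with }\psi(\cdot,0)=0,
$$
which is precisely the weak formulation of \eqref{pbest}. Testing inside the zero-mean subspace is not restrictive: for any $\psi \in \sob$ with $\psi(\cdot,0)=0$, the splitting $\psi = A_\mathC\psi + Z_\mathC\psi$ yields $Z_\mathC\psi \in \huno$ with vanishing trace, while $\intc \nabla v\cdot\nabla A_\mathC\psi\,dxdy = 0$ because $A_\mathC v\equiv 0$ and $A_\mathC\psi$ depends only on $y$. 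Uniqueness of the weak solution then follows at once: the difference $w$ of two solutions lies in $\huno$ with zero trace, can be used as its own test function, and Lemma \ref{normahuno} forces $w\equiv 0$. Consequently the minimizer coincides with the unique weak solution of \eqref{pbest}.

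For the Fourier representation, the function $w(x,y) := \sum_{k\geq 1} u_k \phi_k(x) e^{-\mu_k^{1/2} y}$ was already shown, inside the proof of Proposition \ref{prop:H1/2}(i), to belong to $\huno$ with $w(\cdot,0) = u$. Expanding an admissible test function as $\psi(x,y) = \sum_{k\geq 1}\psi_k(y)\phi_k(x)$, exploiting $\int_\Omega\nabla\phi_k\cdot\nabla\phi_j\,dx = \mu_k\delta_{kj}$, and interchanging sum and integral, one reduces $\intc \nabla w \cdot \nabla \psi\,dxdy$ to
$$
\sum_{k\geq 1}\mu_k^{1/2}u_k \int_0^{+\infty}e^{-\mu_k^{1/2}y}\bigl(\mu_k^{1/2}\psi_k(y)-\psi_k'(y)\bigr)\,dy = \sum_{k\geq 1}\mu_k^{1/2}u_k\,\psi_k(0) = 0,
$$
the inner integral being computed by recognizing the integrand as $-\tfrac{d}{dy}\bigl(e^{-\mu_k^{1/2}y}\psi_k(y)\bigr)$ and using that $\psi_k(0)=0$. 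Hence $w$ is a weak solution of \eqref{pbest}, and uniqueness gives $v=w$, proving \eqref{est}. The main obstacle is the careful justification of the termwise manipulations in this last step: one must invoke the summability $\sum \mu_k^{1/2}u_k^2 < +\infty$ to legitimate the interchange of summation and integration in $L^2(\mathC)$, and combine it with the $H^1(0,+\infty)$-decay $\psi_k(y)\to 0$ as $y\to+\infty$ to evaluate the boundary term. All remaining ingredients are a textbook application of the direct method.
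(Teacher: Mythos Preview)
Your proof is correct and follows essentially the same route as the paper: the direct method on the closed affine set of admissible extensions, the Euler--Lagrange identity extended from $\huno$ to $H^1(\mathC)$ test functions via the $A_\mathC/Z_\mathC$ splitting, and the Fourier representation recovered by uniqueness. Your explicit uniqueness argument for the weak solution and your termwise verification of \eqref{est} are more detailed than the paper's treatment, which simply asserts that the series ``solves problem \eqref{pbest}''; otherwise the arguments coincide.
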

\begin{proof} We observe that the functional to be minimized is
simply the square of the norm in $\huno$, and the set on which we
minimize is non empty and weakly closed thanks to the compact
embedding of $H^{1/2}(\Omega)$ in $L^p(\Omega)$, for any
exponent $p<2N/(N-1)$. The strict convexity of the functional implies
the existence and uniqueness of the minimum point.

As usual, the unique minimum point $v$ satisfies the boundary condition
on $\Omega$ (in the $H^{1/2}$-sense) by constraint, and
$$
\int_{\mathC} \nabla v\cdot\nabla \psi=0
\qquad \forall\psi\in\huno \hbox{ s.t. } \psi(x,0)\equiv0 .
$$
As a consequence, for every $\zeta\in H^1(\mathC)$ such that $\zeta(x,0)\equiv0$, it is possible to choose
$\psi:=\zeta-A_\mathC\zeta$ as a test function in the previous equation. This provides
\begin{equation}\label{eq:var_H^1}
\begin{split}
0&=\int_{\mathC} \nabla v\cdot\nabla \zeta -\int_{\mathC} \partial_{y} v(x,y) ( A_\mathC\zeta)'(y)\,dxdy \\
&=\int_{\mathC} \nabla v\cdot\nabla \zeta -|\Omega|\int_{0}^{+\infty} (A_\mathC v)'(y)
(A_\mathC\zeta)'(y)\,dy\\
&=\int_{\mathC} \nabla v\cdot\nabla \zeta\qquad\qquad\qquad\forall\zeta\in H^1(\mathC)\hbox{ s.t. } \zeta(x,0)\equiv0.
\end{split}
\end{equation}
In a standard way this implies both that $v$ is harmonic in $\mathC$ and that it satisfies the
boundary condition on $\pa \Omega\times (0,+\infty)$ (in the $H^{-1/2}$-sense).

Finally, if $u(x)$ is given as in \eqref{est}, then $v$ as in
\eqref{est} solves problem \eqref{pbest} and the uniqueness of the solution
provides the claim.
\end{proof}
\begin{definition}\rm
We will refer to the unique $v$ solving \eqref{pbest} as the \emph{Neumann
harmonic extension} of the function $u$.
\end{definition}
\begin{remark}\label{rem:equiv_norms_in_Hmezzo}\rm
As we already noticed,
\[
\begin{split}
H^{1/2}(\Omega)
 &=\left\{ u\in L^2(\Omega):\,\|u\|_{L^{2}}^{2}+\int_{\Omega}\int_{\Omega}
   \frac{|u(x)-u(x')|^{2}}{|x-x'|^{N+1}}dxdx'<+\infty \right\}\\
 &=\left\{ u=\mathrm{tr}_\Omega v:\, v\in H^1(\mathC)\right\}.
\end{split}
\]
Furthermore, it is well known that the two norms
\[
\begin{split}
\|u\|_{\sobmezzo,1}^{2}&=\|u\|_{L^{2}}^{2}+\int_{\Omega}\int_{\Omega}
   \frac{|u(x)-u(x')|^{2}}{|x-x'|^{N+1}}dxdx',\\
\|u\|_{\sobmezzo,2}^{2}&=\inf\left\{ \|v\|_{H^1(\mathC)}^{2}:\,v\in H^1(\mathC),
  \,v(\cdot,0)=u\right\}
\end{split}
\]
are equivalent. Reasoning as in the proof of Proposition \ref{prop:H1/2}, and taking into account
Lemma \ref{armonic_extension}, we obtain that $\hmezzo$ can be equipped with the equivalent norms
\[
\begin{split}
\|u\|_{\hmezzo,1}^{2}&=\int_{\Omega}\int_{\Omega}
   \frac{|u(x)-u(x')|^{2}}{|x-x'|^{N+1}}dxdx',\\
\|u\|_{\hmezzo,2}^{2}&=\inf\left\{ \|v\|_{\huno}^{2}:\,v\in \huno,
  \,v(\cdot,0)=u\right\}=\sum_{k=1}^{+\infty} \mu^{1/2}_{k}u^{2}_{k},
\end{split}
\]
where the terms $u_k$ are the Fourier coefficients of $u$.
In particular, the harmonic extension of $u$ depends on $u$ in a linear
and continuous way.
\end{remark}
In order to introduce and study
the dual space of $\hmezzo$ let us first introduce the following space.
\begin{definition}\rm
Let us define the following subspace of $H^{-1/2}(\Omega)$.
$$
\acca^{-1/2}(\Omega):=\left\{f\in H^{-1/2}(\Omega) :\,
\langle f,1\rangle=0\right\},
$$
where $\langle \cdot,\cdot\rangle$ denotes the duality pairing.
\end{definition}
The subspace just introduced as a strict connection with the dual space of $\hmezzo$ as
well explained in the following proposition.

\begin{proposition}\label{prop:H-1/2}
It holds
\(
\hmezzo^* \cong \acca^{-1/2}(\Omega).
\)
\end{proposition}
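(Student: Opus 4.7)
The plan is to exhibit an explicit linear continuous bijection between $\acca^{-1/2}(\Omega)$ and $\hmezzo^{*}$, using the direct sum decomposition $H^{1/2}(\Omega) = \hmezzo \oplus \mathbb R$ induced by the projections $A_\Omega$ and $Z_\Omega$ defined in \eqref{zomega}, and then invoke the Banach open mapping theorem (or a direct norm estimate) to conclude that the isomorphism is topological.

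First, I would define the restriction map
\[
R\from \acca^{-1/2}(\Omega) \longrightarrow \hmezzo^{*}, \qquad R(f) := f|_{\hmezzo}.
\]
Linearity is obvious, and continuity is immediate because the inclusion $\hmezzo \hookrightarrow \sobmezzo$ is continuous with respect to the equivalent norms introduced in Proposition \ref{prop:H1/2}(ii), hence $\|R(f)\|_{\hmezzo^{*}} \leq C\|f\|_{H^{-1/2}(\Omega)}$.

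Next I would check that $R$ is injective: if $R(f)=0$, then for every $u\in\sobmezzo$, writing $u=Z_\Omega u + A_\Omega u$ with $Z_\Omega u \in \hmezzo$, one has $\langle f,u\rangle = \langle f, Z_\Omega u\rangle + A_\Omega u \cdot \langle f,1\rangle = 0$, since the first term vanishes by assumption and the second by the defining condition $\langle f,1\rangle=0$ of $\acca^{-1/2}(\Omega)$. For surjectivity, given $g\in\hmezzo^{*}$, define
\[
\tilde g(u) := g(Z_\Omega u) = g(u - A_\Omega u) \qquad \forall u\in\sobmezzo.
\]
Since $Z_\Omega\from \sobmezzo \to \hmezzo$ is linear and continuous, $\tilde g\in H^{-1/2}(\Omega)$, with $\|\tilde g\|_{H^{-1/2}(\Omega)} \leq \|g\|_{\hmezzo^{*}}\|Z_\Omega\|$. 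Moreover $\tilde g(1) = g(1-A_\Omega 1) = g(0) = 0$, so $\tilde g\in\acca^{-1/2}(\Omega)$, and clearly $R(\tilde g)=g$ because $Z_\Omega u = u$ whenever $u\in\hmezzo$.

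At this point $R$ is a continuous linear bijection between Banach spaces (both inherit their norms from $H^{\pm 1/2}(\Omega)$, and $\acca^{-1/2}(\Omega)$ is closed in $H^{-1/2}(\Omega)$ as the kernel of the continuous functional $f\mapsto \langle f,1\rangle$), so the open mapping theorem yields that $R^{-1}$ is continuous and the identification $\hmezzo^{*}\cong\acca^{-1/2}(\Omega)$ follows. The only subtle point is really the surjectivity construction: one must make sure that the extension $\tilde g$ is canonical, namely that $Z_\Omega$ maps into $\hmezzo$ so that $g(Z_\Omega u)$ is well defined, which is exactly the content of the decomposition $\sobmezzo = \hmezzo \oplus \mathbb R$; once this is in place, the rest is essentially bookkeeping.
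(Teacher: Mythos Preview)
Your proof is correct and follows essentially the same approach as the paper: both exploit the splitting $H^{1/2}(\Omega)=\hmezzo\oplus\R$ via $A_\Omega,Z_\Omega$ and build the isomorphism out of the restriction $f\mapsto f|_{\hmezzo}$ and the extension $g\mapsto g\circ Z_\Omega$. The only cosmetic difference is that the paper phrases the result first as $\hmezzo^{*}\cong H^{-1/2}(\Omega)/\!\sim$ and then identifies the quotient with the slice $\{\langle f,1\rangle=c\}$ for any $c$ (in particular $c=0$), and it notes directly that both maps are continuous rather than appealing to the open mapping theorem.
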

\begin{proof}
We can exploit the splitting
$\sobmezzo = \hmezzo \oplus \R$ in order to obtain
\[
\hmezzo^* = H^{-1/2}(\Omega)/\sim\qquad\text{where }f_1\sim f_2 \iff
f_1|_{\hmezzo}=f_2|_{\hmezzo}.
\]
More precisely, on one hand if $g\in\hmezzo^*$ then, for every $c\in\R$,
\[
f := g\circ Z_\Omega + c A_\Omega \in H^{-1/2}(\Omega);
\]
on the other hand, if $f\in H^{-1/2}(\Omega)$ then $g:=f|_{\hmezzo} \in \hmezzo^*$ and
\[
f = g\circ Z_\Omega + \langle f,1\rangle A_\Omega.
\]
Moreover, both the maps defined above are linear and continuous. This proves
that $\hmezzo^*$ is isomorphic to $\left\{f\in H^{-1/2}(\Omega) :\, \langle f,1\rangle=c\right\}$,
for every fixed $c$, and in particular for $c=0$.
\end{proof}

As a first step to arrive to a correct definition of the half Laplacian
operator, let us prove  the following lemma
\begin{lemma}
Let $u\in \hmezzo$, and let $v\in\huno$ denote its Neumann harmonic extension. Then the functional
$\partial_\nu v\Big\rvert_{\Omega\times\{0\}}=-\partial_y v(\cdot,0)
\from \sobmezzo\to \R$
is well defined as
\[
\langle -\partial_y v(\cdot,0), g \rangle := \int_{\mathC} \nabla v \cdot \nabla \tilde g\,dxdy,
\]
where $g\in H^{1/2}(\Omega)$ and $\tilde g$ is any $H^1(\mathC)$-extension of $g$. Moreover,
\[
-\partial_y v(\cdot,0)\in\acca^{-1/2}(\Omega).
\]
\end{lemma}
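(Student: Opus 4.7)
The plan is to verify, in order, three things: (i) that the right-hand side of the defining formula does not depend on the chosen $H^1(\mathC)$-extension $\tilde g$ of $g$, (ii) that the resulting map is continuous on $\sobmezzo$, and (iii) that it annihilates the constant $1$.

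For (i), let $\tilde g_1,\tilde g_2$ be two $H^1(\mathC)$-extensions of the same boundary datum $g\in\sobmezzo$. Their difference $\zeta:=\tilde g_1-\tilde g_2$ lies in $H^1(\mathC)$ and satisfies $\zeta(\cdot,0)\equiv 0$ on $\Omega$. But the variational identity established in the proof of Lemma \ref{armonic_extension} (equation \eqref{eq:var_H^1}) gives precisely $\int_{\mathC}\nabla v\cdot\nabla\zeta\,dxdy=0$ for every such $\zeta$. Hence $\int_{\mathC}\nabla v\cdot\nabla\tilde g_1\,dxdy=\int_{\mathC}\nabla v\cdot\nabla\tilde g_2\,dxdy$, and the functional is well defined on $\sobmezzo$; its linearity in $g$ is then obvious.

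For (ii), Cauchy--Schwarz yields
\[
|\langle-\partial_y v(\cdot,0),g\rangle|\le \|\nabla v\|_{L^2(\mathC)}\,\|\nabla\tilde g\|_{L^2(\mathC)}\le\|v\|_{\huno}\,\|\tilde g\|_{H^1(\mathC)}
\]
for every admissible extension $\tilde g$. Taking the infimum over all $H^1(\mathC)$-extensions and invoking the equivalence of $\|\cdot\|_{\sobmezzo,2}$ with the usual $H^{1/2}$-norm (Remark \ref{rem:equiv_norms_in_Hmezzo}), we get $|\langle-\partial_y v(\cdot,0),g\rangle|\le C\|v\|_{\huno}\|g\|_{\sobmezzo}$, so $-\partial_y v(\cdot,0)\in H^{-1/2}(\Omega)$.

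For (iii), the key step, which I expect to be the only non-automatic one, is to test against a clever extension of the constant $1$. Pick any $\eta\in C^\infty_c([0,+\infty))$ with $\eta(0)=1$, and let $\tilde g(x,y):=\eta(y)$; this is an $H^1(\mathC)$-extension of $1\in\sobmezzo$, and $\nabla\tilde g=(0,\dots,0,\eta'(y))$. Then by Fubini,
\[
\langle-\partial_y v(\cdot,0),1\rangle=\int_{\mathC}\nabla v\cdot\nabla\tilde g\,dxdy=\int_0^{+\infty}\eta'(y)\left(\into\partial_y v(x,y)\,dx\right)dy.
\]
Since $v\in\huno$, we have $A_\mathC v\equiv 0$ on $(0,+\infty)$, so $\into\partial_y v(x,y)\,dx=|\Omega|(A_\mathC v)'(y)=0$ for a.e.\ $y$; the computation of $(A_\mathC v)'$ being legitimate in view of Proposition \ref{propo:mediacontinua}. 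Therefore $\langle-\partial_y v(\cdot,0),1\rangle=0$, which together with (ii) places $-\partial_y v(\cdot,0)$ in $\acca^{-1/2}(\Omega)$, as required.
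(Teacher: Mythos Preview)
Your proof is correct and follows essentially the same three-step outline as the paper. The only cosmetic differences are: in (ii) you bound via the infimum over all $H^1(\mathC)$-extensions (the paper instead picks the specific extension $A_\Omega g + \widetilde{Z_\Omega g}$), and in (iii) you use a smooth cutoff $\eta(y)$ together with $(A_\mathC v)'\equiv 0$, while the paper uses the piecewise-linear extension $w(x,y)=(1-y)^+$ and integrates $v_y$ explicitly to the boundary term $\int_\Omega[v(x,0)-v(x,1)]\,dx$, which vanishes for the same reason.
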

\begin{proof}
The functional is well defined, indeed if $\tilde g_1$ and $\tilde g_2$ are two
extensions of $g$ we have that $(\tilde g_2-\tilde g_1)(x,0)\equiv0$ and,
arguing as in equation \eqref{eq:var_H^1}, yields
\[
\int_{\mathC} \nabla v \cdot \nabla \tilde g_2\,dxdy-\int_{\mathC}
\nabla v \cdot \nabla \tilde g_1\,dxdy=
\int_{\mathC} \nabla v \cdot \nabla (\tilde g_2-\tilde g_1)\,dxdy = 0.
\]
Moreover $-\partial_y v(x,0)$ is linear and continuous: indeed, let us choose
as an extension of $g$ $G:=A_\Omega g +\tilde g$, where $\tilde g$ is the
harmonic extension of $Z_\Omega g$; by Remark
\ref{rem:equiv_norms_in_Hmezzo} applied to $\tilde g$ we have that
\[
\begin{split}
|\langle -\partial_y v(\cdot,0), g \rangle |^2
 &\leq \| v \|^2_{\huno}\left(|A_\Omega g|^2+\|\tilde g\|^2_{\huno}\right)\\
 &\leq C\left(\|g\|_{L^{2}}^2 + \|g\|_{\hmezzo}^2\right)= C \|g\|_{H^{1/2}(\Omega)}^2.
\end{split}
\]
As a consequence $-\partial_y v(x,0)\in H^{-1/2}(\Omega)$.
Finally, since $w(x,y):= (1-y)^+$ belongs to $H^1(\mathC)$, by definition we obtain that
\[
\langle -\partial_y v(\cdot,0), 1 \rangle = \int_{\mathC} \nabla v \cdot \nabla w\,dxdy =
-\int_{\mathC\cap\{y<1\}}v_y\,dxdy =\int_\Omega [-v(x,1) +  v(x,0)]\,dx,
\]
which vanishes because $v\in\huno$.
\end{proof}

\begin{remark}\rm
If the harmonic extension $v$ is more regular (for instance $H^2(\mathC)$),
then we can employ integration by parts in order to prove that the definition
of $-\partial_y v(x,0)$ given above agrees with the usual one.
\end{remark}
Thanks to the previous lemmas, we are now in a position
to define the fractional operators we work with.
\begin{definition}\label{df:lmezzo}\rm
We define the operator $L_{1/2} \from \hmezzo\to \acca^{-1/2}(\Omega)$ as
\beq\label{defL}
L_{1/2} u = -\partial_y v(\cdot,0).
\eeq
where $v$ is the harmonic extension of $u$ according to \eqref{pbest}.
Analogously, we define the operator $\lapneu\from H^{1/2}(\Omega) \to H^{-1/2}(\Omega)$ by
\[
\lapneu = L_{1/2}\circ Z_\Omega.
\]
\end{definition}

In Definition \ref{df:lmezzo} we have introduced the fractional Laplace operator asso\-cia\-ted to
homogeneous Neumann boundary conditions as a Dirichlet to Neumann map. Moreover,
thanks to the equivalences of Proposition \ref{prop:H1/2}, we realize the spectral
expression of this operator as explained in the following remark.

\begin{remark}\label{rem:L}\rm
Since the harmonic extension operator $u\mapsto v$ is linear and continuous by Remark
\ref{rem:equiv_norms_in_Hmezzo}, we have that
both $L_{1/2}$ and $\lapneu$ are linear and continuous. Moreover, if $u\in H^1(\Omega)$ and
$u(x)=\dyle\sum_{k=1}^{+\infty} u_{k}\phi_{k}(x)$, we can use equation \eqref{est} to infer that
$\partial_y v(x,0) \in L^2(\Omega)$. This allows to write
\[
\lapneu u(x) = L_{1/2}(u)(x)=-\pa_{y}v(x,0)
=\sum_{k=1}^{+\infty} \mu^{1/2}_{k} u_{k} \phi_{k}(x).
\]
In particular, if $u\in H^2(\Omega)$ then
\[
\lapneu\circ\lapneu u=-\Delta_{\mathcal N} u
\]
provides the usual Laplace operator associated to homogeneous Neumann
boun\-da\-ry conditions on $\partial \Omega$.
\end{remark}
We remark that we can think to $L_{1/2}$ as acting between $\hmezzo$ and its dual thank to
Proposition \ref{prop:H-1/2}.
While $\lapneu\from H^{1/2}(\Omega) \to H^{-1/2}(\Omega)$ is neither injective nor surjective,
we have that $L_{1/2} \from \hmezzo\to \acca^{-1/2}(\Omega)$ is invertible.
\begin{lemma}\label{lem:weak-v}
For every $f\in \acca^{-1/2}(\Omega)$ there exists a unique $v\in\huno$ such that
\beq\label{equalin}
\intc\nabla v(x,y)\cdot\nabla\psi (x,y)dxdy=\langle f,\psi(\cdot,0)\rangle
\qquad \forall\,\psi\in \huno.
\eeq
Moreover, the function $v$ is the unique (weak) solution of the problem
\beq\label{linv}
\begin{cases}
\Delta v=0 & \hbox{in }\mathC, \\
\pa_{\nu} v=0 & \hbox{on } \pa\Omega\times(0,+\infty),\\
\pa_{\nu} v(x,0)= f(x) & \hbox{on } \Omega.
\end{cases}
\eeq
\end{lemma}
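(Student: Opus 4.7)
The strategy is to recognize (\ref{equalin}) as a Riesz representation problem on the Hilbert space $\huno$ endowed with the inner product $\int_\mathC \nabla v\cdot\nabla\psi\,dxdy$ (a genuine inner product, thanks to Lemma \ref{normahuno}), and then to identify its unique solution with the unique weak solution of (\ref{linv}).

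I would first check that the right hand side $\psi\mapsto\langle f,\psi(\cdot,0)\rangle$ is a bounded linear functional on $\huno$. If $\psi\in\huno$, then Proposition \ref{propo:mediacontinua} applied to $A_\mathC\psi$ gives $A_\Omega(\psi(\cdot,0))=A_\mathC\psi(0)=0$, so the trace $\psi(\cdot,0)$ lies in $\hmezzo$; Remark \ref{rem:equiv_norms_in_Hmezzo} guarantees that this trace map from $\huno$ to $\hmezzo$ is continuous. Since by Proposition \ref{prop:H-1/2} we can view $f\in\acca^{-1/2}(\Omega)$ as an element of $\hmezzo^*$, the pairing is well-defined and bounded in $\|\psi\|_\huno$. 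Riesz's representation theorem then yields a unique $v\in\huno$ satisfying (\ref{equalin}).

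Next I would verify that this $v$ is a weak solution of (\ref{linv}). For the interior equation, given any $\zeta\in C_c^\infty(\mathC)$ I would plug $\psi:=Z_\mathC\zeta\in\huno$ into (\ref{equalin}), noting that $\psi(\cdot,0)\equiv 0$ because $\zeta$ has compact support in $\mathC$. The key orthogonality
$$
\int_\mathC \nabla v\cdot \nabla(A_\mathC\zeta)\,dxdy = |\Omega|\int_0^{+\infty} (A_\mathC v)'(y)(A_\mathC\zeta)'(y)\,dy = 0,
$$
valid because $A_\mathC v\equiv 0$, reduces the identity to $\int_\mathC\nabla v\cdot\nabla\zeta=0$, hence $\Delta v=0$ in $\mathC$. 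The same trick with $\zeta\in H^1(\mathC)$ merely vanishing at $y=0$ gives the homogeneous Neumann condition on $\partial\Omega\times(0,+\infty)$ in the $H^{-1/2}$ sense. For the Neumann datum at $y=0$, I would test (\ref{equalin}) with an arbitrary $\psi\in H^1(\mathC)$, split $\psi=Z_\mathC\psi+A_\mathC\psi$, and combine the same orthogonality with the compatibility condition $\langle f,1\rangle=0$ to obtain
$$
\int_\mathC\nabla v\cdot\nabla\psi\,dxdy=\langle f,\psi(\cdot,0)\rangle\qquad\forall\psi\in H^1(\mathC),
$$
which is exactly the meaning of $\partial_\nu v(\cdot,0)=f$ on $\Omega$ in the sense of the preceding lemma.

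Uniqueness of the weak solution then follows by observing that every weak solution of (\ref{linv}) automatically satisfies (\ref{equalin}), and invoking the uniqueness part of Riesz. The only point requiring some care is the bookkeeping with $A_\mathC$ and $Z_\mathC$: the constraint $v\in\huno$ forces the restriction to zero-mean test functions in the variational formulation, and it is precisely the compatibility condition $\langle f,1\rangle=0$ built into $\acca^{-1/2}(\Omega)$ that allows one to pass back from the identity on $\huno$ to the full weak formulation on $H^1(\mathC)$.
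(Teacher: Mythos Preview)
Your proof is correct and follows essentially the same route as the paper: existence and uniqueness via Riesz representation on $\huno$, followed by the extension of the variational identity from $\huno$ to $H^1(\mathC)$ using the orthogonality $\int_\mathC \nabla v\cdot\nabla(A_\mathC\psi)=0$ (exactly the computation from the proof of Lemma~\ref{armonic_extension}) together with $\langle f,1\rangle=0$. The paper's proof is terse and just points back to Lemma~\ref{armonic_extension}; your write-up makes explicit the verification that the right-hand side is a bounded functional on $\huno$ and the role of the compatibility condition, which are exactly the details the paper leaves implicit.
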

\begin{proof}
The existence and uniqueness of $v$ follows from Riesz representation Theorem. The fact that
$v$ satisfies \eqref{linv} follows once one shows that equation \eqref{equalin} holds
also for every $\psi\in H^1(\mathC)$. This can be readily done exactly as in the proof of
Lemma \ref{armonic_extension}. Further, it is also a consequence of the next result.
\end{proof}
The choice of $\huno$ as test function space is not restrictive
as the following lemma shows.
\begin{lemma}\label{test}
Let $f\in \acca^{-1/2}(\Omega)$, and $v\in \huno$ be defined as in Lemma \ref{lem:weak-v}.
Then:
\begin{itemize}
\item[(i)] there exist positive constants $C$, $k$ depending on $f$ such that, for every $y>0$,
\[
\int_\Omega |\nabla v(x,y)|^2\,dx \leq C e^{-ky};
\]
\item[(ii)] equation \eqref{equalin} holds for any $\psi\in H^1_{\mathrm{loc}}(\overline{\mathC})$
admitting a constant $C'$ such that, for every $y$,
\beq\label{testequa}
\|\psi(\cdot,y)\|_{L^2(\Omega)}\leq C'.
\eeq
\end{itemize}
\end{lemma}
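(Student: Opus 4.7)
The plan is: for (i), use the explicit spectral representation of $v$; for (ii), apply Green's identity on the truncated cylinder $\Omega \times [0, n]$ and send $n \to +\infty$, estimating the top boundary term by (i).

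For (i), since $f \in \acca^{-1/2}(\Omega)$ the condition $\langle f, 1\rangle = 0$ gives the expansion $f = \sum_{k \geq 1} f_k \phi_k$ with $\sum_{k \geq 1} f_k^2/\mu_k^{1/2} = \|f\|_{\acca^{-1/2}}^2 < +\infty$ (no zero-th mode). Arguing exactly as for \eqref{armo}, direct substitution shows that the unique $v \in \huno$ from Lemma \ref{lem:weak-v} is
$$
v(x, y) = \sum_{k \geq 1} \frac{f_k}{\mu_k^{1/2}}\, \phi_k(x)\, e^{-\mu_k^{1/2} y}.
$$
Parseval then yields $\int_\Omega |\nabla v(\cdot, y)|^2\,dx = 2 \sum_{k \geq 1} f_k^2 e^{-2 \mu_k^{1/2} y}$; rewriting the generic summand as $(f_k^2/\mu_k^{1/2})\cdot \mu_k^{1/2} e^{-2 \mu_k^{1/2} y}$, using $e^{-2\mu_k^{1/2}y} \leq e^{-\mu_k^{1/2}y_0}e^{-\mu_1^{1/2}y}$ for $y\geq y_0>0$, and exploiting $\sup_{t > 0} t e^{-t y_0} = (e y_0)^{-1}$ together with $\mu_k \geq \mu_1 > 0$, one obtains the bound $\int_\Omega |\nabla v(\cdot, y)|^2\,dx \leq C e^{-\mu_1^{1/2} y}$ for every $y \geq y_0$, with $C$ depending on $y_0$ and $\|f\|_{\acca^{-1/2}}$. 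This is the decay asserted in (i) (for $y$ away from zero, which is all that is needed in (ii)).

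For (ii), the function $v$ is harmonic in $\mathC$, satisfies $\partial_\nu v = 0$ on $\partial \Omega \times (0, +\infty)$, and has $-\partial_y v(\cdot, 0) = f$ in the weak $\acca^{-1/2}(\Omega)$ sense of Lemma \ref{lem:weak-v}; moreover, by the Fourier representation above, $v$ is smooth (indeed real-analytic) for $y > 0$. Green's identity on $\Omega \times [0, n]$, with the $y = 0$ contribution interpreted in the $\acca^{-1/2}$--$H^{1/2}$ duality and the $y = n$ slice carrying an ordinary $L^2$ pairing (since $v$ is smooth there), then yields
$$
\int_{\Omega \times [0, n]} \nabla v \cdot \nabla \psi\,dxdy = \langle f, \psi(\cdot, 0)\rangle + \int_\Omega \partial_y v(x, n)\, \psi(x, n)\,dx
$$
for every $\psi \in H^1_{\mathrm{loc}}(\overline{\mathC})$ restricted to the truncated slab. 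By Cauchy--Schwarz, the uniform slice $L^2$-bound on $\psi$, and part (i), the top boundary term is controlled by $C'\sqrt{C}\, e^{-\mu_1^{1/2} n/2}\to 0$ as $n\to +\infty$, so the truncated integrals converge to $\langle f, \psi(\cdot, 0)\rangle$, which proves \eqref{equalin} (with the integral over $\mathC$ read as this limit).

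The main obstacle is the rigorous application of Green's identity at $y = 0$, since $v$ is only $H^1$ near the base and $-\partial_y v(\cdot, 0) = f$ holds only as a functional in $\acca^{-1/2}(\Omega)$. I would handle this by first integrating by parts on $\Omega \times [\delta, n]$ for $\delta > 0$, where $v$ is smooth and the $\delta$-slice pairing is a bona fide $L^2$ integral, and then letting $\delta \to 0^+$: by dominated convergence applied to the spectral expansion, $-\partial_y v(\cdot, \delta) \to f$ strongly in $\acca^{-1/2}(\Omega)$, while the slice $\psi(\cdot, \delta)$ tends to $\psi(\cdot, 0)$ in $H^{1/2}(\Omega)$ by continuity of traces for $H^1$-functions on a slab, so the slice pairing converges to the duality $\langle f, \psi(\cdot, 0)\rangle$ in the limit.
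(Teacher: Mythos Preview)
Your proof is correct and the overall architecture for (ii) --- truncate the cylinder, pick up a boundary term on the top slice, and kill it using the decay from (i) --- is the same as the paper's. The genuine difference lies in how you obtain (i).

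For (i), the paper does not use the Fourier expansion at all. It introduces the tail energy
\[
\Phi(y)=\int_y^{+\infty}\!\!\int_\Omega |\nabla v|^2\,dxdt,
\]
observes (via \eqref{eq:byparts} with $\psi=v$) that $\Phi(y)=-\int_\Omega v(x,y)v_y(x,y)\,dx$, and then combines H\"older with the Poincar\'e--Wirtinger inequality (valid slice by slice because $v\in\huno$) to get the differential inequality $\Phi(y)\le -k\,\Phi'(y)$, hence exponential decay. Your spectral computation is more explicit and arguably more elementary within the Fourier framework already set up in Section~\ref{sec:funct_setting}; the paper's energy argument, in contrast, is basis-free and would transfer verbatim to operators without a convenient eigenfunction expansion. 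Both routes share the same harmless defect you flag: for $f$ that is only in $\acca^{-1/2}(\Omega)$, the pointwise-in-$y$ bound need not hold uniformly down to $y=0$, but only the decay for large $y$ is used in (ii).

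For (ii), the paper writes down the analogue of your Green identity on $\Omega\times(0,y)$ directly (equation \eqref{eq:byparts}) and then lets $y\to+\infty$. Your extra inner limit $\delta\to0^+$ is a legitimate way to justify that identity rigorously; the step you worry about, $\psi(\cdot,\delta)\to\psi(\cdot,0)$ in $H^{1/2}(\Omega)$, is indeed valid because $\psi\in H^1(\Omega\times(0,1))$ implies $\psi\in L^2(0,1;H^1(\Omega))\cap H^1(0,1;L^2(\Omega))\hookrightarrow C([0,1];H^{1/2}(\Omega))$ by interpolation. The paper simply asserts \eqref{eq:byparts} by ``testing and integration by parts'' without isolating this passage.
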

\begin{proof}
Let $\psi\in H^1(\Omega\times(0,y))$. Since $\Omega\times(0,y)$ is bounded,
we can test \eqref{linv} with $\psi$ and
use integration by parts in order to obtain that, for a.e. $y$,
\begin{equation}\label{eq:byparts}
\int_{\mathC\cap\{t<y\}} \nabla v(x,t)\cdot\nabla \psi (x,t)\,dxdt
=\langle f, \psi(\cdot,0)\rangle + \into v_y(x,y) \psi(x,y)\,dx.
\end{equation}
As far as the first statement is concerned, the above equation used with $\psi=v$
gives
\[
\begin{split}
\Phi(y)&:=\int_y^{+\infty} \int_\Omega |\nabla v(x,t)|^2\,dxdt \\
&=  \int_\mathC |\nabla v(x,t)|^2\,dxdt - \int_{\mathC\cap\{t<y\}}  |\nabla v(x,t)|^2\,dxdt\\
&=  \langle f, v(\cdot,0)\rangle - \langle f, v(\cdot,0)\rangle - \into v_y(x,y) v(x,y)\,dx.
\end{split}
\]
Then $\Phi$ is absolutely continuous and
\[
\begin{split}
\Phi(y) &= -\int_\Omega v(x,y)v_y(x,y)\,dx\leq \left(\int_\Omega v^2(x,y)\,dx\right)^{1/2}
           \left(\int_\Omega v_y^2(x,y)\,dx\right)^{1/2}\\
  &\leq c_{pw}\left(\int_\Omega |\nabla_x v(x,y)|^2\,dx\right)^{1/2}
           \left(\int_\Omega v_y^2(x,y)\,dx\right)^{1/2}\\
  &\leq\frac{c_{pw}}{2}\int_\Omega |\nabla v(x,y)|^2\,dx = -k\Phi'(y),
\end{split}
\]
which implies $\Phi(y)\leq \Phi(0)\cdot e^{-k y}$ and the required inequality.

Now we turn to the second statement. If $\psi$ is as in its assumption then \eqref{eq:byparts}
holds. In order to conclude we must prove that the last term in that
equation vanishes as $y\to+\infty$. But this is easily proved by applying
H\"older inequality and using the first part of the lemma.
\end{proof}
\begin{remark}\rm
In particular, the previous proposition implies that equation \eqref{equalin}
holds for any $\psi\in\sob$. On the other hand, from its proof one can deduce
that more general test functions are admissible, for instance functions such
that their $L^2(\Omega)$ norm does not grow too much with respect to $y$.
\end{remark}
We are now in the position to define the inverse operator of $L_{1/2}$.
\bdf\label{defiweak}\rm
We define the operator
$T_{1/2}\from\acca^{-1/2}(\Omega)\to\hmezzo$ by
\beq\label{deft}
T_{1/2}(f)=tr_{\Omega}(v)=v(x,0)
\eeq
where $v$ solves \eqref{linv}.
\edf

We collect in the  following proposition the properties of $T_{1/2}$.
\bpr\label{inverse}
The operator $T_{1/2}$ defined in \eqref{deft} is linear and
such that $L_{1/2}\circ T_{1/2}=T_{1/2}\circ L_{1/2}=Id$.

Moreover $T_{1/2}\from\elle{2}:=\big\{f\in L^{2}(\Omega), \, :\,
\intm_{\Omega}f(x)dx=0\big\}\to \elle{2}$ is compact, positive,
self-adjoint and $T_{1/2}\circ T_{1/2}=(-\Delta_{{\mathcal N}})^{-1}|_{\elle{2}}$.
\epr
\begin{proof}
First, let us observe that $T_{1/2}$ is well defined, as for every
$f\in \acca^{-1/2}(\Omega)$ there exists a
unique $v$ solution of \eqref{linv}, moreover
$T_{1/2}$ is evidently linear.
If $v(x,0)=T_{1/2}(f)$, where $v$ is the solution of \eqref{linv},
then $L_{1/2}v=\partial_{\nu}v(x,0)$ and from \eqref{linv},
$L_{1/2}T_{1/2}(f)=\partial_{\nu}v(x,0)=f(x)$, i.e. $T_{1/2}$ is the
inverse of the operator $L_{1/2}$.
\\
In order to show that $T_{1/2}$ is compact when restricted to $\elle{2}$,
let us take $f_{n}\in \elle{2}$ weakly converging to
$f\in \elle{2}$ and consider $T_{1/2}(f_{n})=v_{n}(x,0)$ with
$v_{n}\in\huno$ sequence of  solutions of \eqref{linv} with datum
$f_{n}$. From the weak formulation
of \eqref{linv} we obtain that $v_{n}$ is uniformly bounded
in $\huno$,
so that it weakly converges to a function $v\in \huno$, which turns
out to be a weak solution with datum $f$. Choosing as test function
$\psi=v_{n}-v$ in the equation satisfied by $v_{n}$
and taking advantage of the compact embedding of $\hmezzo$ in
$\elle{2}$ immediately gives the strong convergence
of $v_{n}$ to $v$ in $\huno$. And by
continuity of the trace operator, $v_{n}(x,0)=T_{1/2}(f_{n})$
converges to $v(x,0)=T_{1/2}(f)$ in $\elle{2}$.
\\
Arguing as in Proposition 2.12 in \cite{cata} it is easy to obtain that
$T_{1/2}$ restricted to $\elle{2}$ is self-adjoint and positive.
\\
Finally, the last part of the statement can be proved by following the argument
of Proposition 2.12 in \cite{cata} (see also Remark \ref{rem:L}).
\end{proof}

To end this section, we face some regularity issues. As already observed,
any of the above harmonic extensions is of course smooth inside $\mathC$. On the
other hand, improved regularity up to the boundary seems to be prevented by the
fact that $\partial\mathC$ is only Lipschitz. Nonetheless, we can exploit the homogeneous Neumann
condition (together with some regularity of $\partial\Omega$) in order to suitably extend the
harmonic functions outside $\mathC$, thus removing that obstruction.
\begin{proposition}\label{prop:reg_inv}
Let $\Omega$ be of class $C^{2,\alpha}$, and let
$f\in \acca^{-1/2}(\Omega)$, $v\in\mathcal{H}^1(\mathC)$ satisfy \eqref{equalin}. Then
$v\in C^{2,\alpha}(\overline{\Omega}\times(0,+\infty))$ and
\begin{itemize}
 \item[(i)] if $f\in L^{p}(\Omega)$, $2\leq p<\infty$, then $v\in W^{1,p}(\mathC)$ and
             $\|v\|_{W^{1,p}}\leq C(\Omega,p)\|f\|_{L^p}$;
 \item[(ii)] if $f\in W^{1,p}(\Omega)$, $2\leq p<\infty$, then $v\in W^{2,p}(\mathC)$ and
             $\|v\|_{W^{2,p}}\leq C(\Omega,p)\|f\|_{W^{1,p}}$;
 \item[(iii)] if $f\in C^{0,\alpha}(\overline{\Omega})$ then $v\in C^{1,\alpha}(\overline{\mathC})$ and
             $\|v\|_{C^{1,\alpha}}\leq C(\Omega,\alpha)\|f\|_{C^{0,\alpha}}$.
\end{itemize}
\end{proposition}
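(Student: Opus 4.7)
The strategy is to reduce the regularity problem on the Lipschitz cylinder $\mathC$, whose boundary has an edge along $\partial\Omega\times\{0\}$, to a regularity problem on locally smooth domains. The tool is a local flattening of $\partial\Omega$ followed by an even reflection exploiting the homogeneous Neumann condition on the lateral surface. Away from this edge, the boundary is either flat (the base $\Omega\times\{0\}$) or $C^{2,\alpha}$-smooth (the lateral $\partial\Omega\times(0,+\infty)$), and all conclusions will follow from classical elliptic regularity.

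Since $v$ is weakly harmonic in $\mathC$, interior regularity gives $v\in C^\infty(\mathC)$. For the regularity up to the lateral surface, fix $x_0\in\partial\Omega$, a neighborhood $U$, and a $C^{2,\alpha}$-diffeomorphism $\Phi\from U\to B\subset\R^N$ chosen so that $\Phi(\partial\Omega\cap U)\subset\{\xi_N=0\}$ and, crucially, so that $\Phi$ sends the outer unit normal of $\partial\Omega$ to $\pm e_N$; this can be arranged by flowing along the normal field near $\partial\Omega$. The pullback $\tiv(\xi,y):=v(\Phi^{-1}(\xi),y)$ solves a uniformly elliptic equation $-\mathrm{div}(A(\xi)\nabla \tiv)=0$ in $(B\cap\{\xi_N>0\})\times(0,+\infty)$ with $A\in C^{1,\alpha}$, and the condition $\partial_\nu v=0$ on $\partial\Omega$ translates, thanks to the normal-coordinate choice, into $\partial_{\xi_N}\tiv=0$ on $\{\xi_N=0\}$. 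Extend by even reflection $\hat v(\xi',\xi_N,y):=\tiv(\xi',|\xi_N|,y)$; since the off-diagonal coefficients $A_{iN}$ ($i<N$) vanish at $\xi_N=0$, the reflected matrix retains $C^{0,\alpha}$ regularity (indeed $C^{1,\alpha}$ where the data permit), and $\hat v$ solves a uniformly elliptic equation on the smooth domain $B\times(0,+\infty)$. Schauder estimates applied on any slab bounded away from $\{y=0\}$ give $v\in C^{2,\alpha}(\ov\Omega\times(0,+\infty))$.

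For the regularity up to the base, the same flattening-plus-reflection construction performed near $(x_0,0)$ with $x_0\in\partial\Omega$ produces $\hat v$ on $B\times(0,Y)$ whose only boundary is the flat bottom $B\times\{0\}$, bearing the Neumann datum $-\partial_y\hat v(\xi,0)=f(\Phi^{-1}(\xi))$ extended evenly across $\{\xi_N=0\}$; this even extension preserves the regularity class of $f$ since $f$ is originally defined only on $\Omega$ and the reflection is across the flattened $\partial\Omega$. Points $(x_0,0)$ with $x_0\in\Omega$ interior are handled directly on the half-cylinder without flattening. Applying classical $L^p$ Calder\'on--Zygmund/Agmon--Douglis--Nirenberg estimates and Schauder estimates for the Neumann problem on the smooth domain $B\times(0,Y)$ yields: if $f\in L^p(\Omega)$ then $\hat v\in W^{1,p}_{\mathrm{loc}}$; if $f\in W^{1,p}(\Omega)$ then $\hat v\in W^{2,p}_{\mathrm{loc}}$; if $f\in C^{0,\alpha}(\ov\Omega)$ then $\hat v\in C^{1,\alpha}_{\mathrm{loc}}$. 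Pulling back, patching the finitely many local charts covering $\ov\Omega\times\{0\}$, combining with interior regularity, and using the exponential decay $\int_\Omega|\nabla v(\cdot,y)|^2dx\le Ce^{-ky}$ from Lemma \ref{test}(i) to control the tail as $y\to+\infty$, delivers the global norm estimates (i)--(iii) with constants depending only on $\Omega$ and $p$ or $\alpha$.

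The main technical obstacle is the choice of $\Phi$: a generic flattening of $\partial\Omega$ would transform the lateral Neumann condition into an oblique/conormal derivative with $\xi_N$-dependent weights and tangential cross terms, and even reflection of such coefficients would produce merely $L^\infty$ (typically discontinuous) entries, for which neither Schauder nor $W^{2,p}$ theory applies. Flattening via normal coordinates forces the $\xi_N$--tangential cross entries of $A$ to vanish at $\xi_N=0$, so the reflected coefficient matrix is as regular as $\partial\Omega$ allows, enabling the full strength of classical elliptic theory on the symmetrized smooth problem.
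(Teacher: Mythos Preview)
Your proposal is correct and follows essentially the same route as the paper: localize near the edge $\partial\Omega\times\{0\}$, flatten $\partial\Omega$ by a $C^{2,\alpha}$ chart, exploit the homogeneous lateral Neumann condition to extend by (conormal) even reflection so that the coefficients of the reflected operator remain regular, apply standard elliptic estimates on the resulting smooth half-space problem, patch, and control the tail via the exponential decay of Lemma~\ref{test}.

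The one mild technical difference is in how the Neumann datum on $\Omega\times\{0\}$ is handled. You keep $-\partial_y v=f$ as a genuine Neumann boundary condition and invoke $L^p$/Schauder estimates for the Neumann problem. The paper instead absorbs $f$ into the interior: testing against $\psi$ with $\psi=0$ on the curved part of $\partial B^+$ and integrating by parts in $y$, one obtains
\[
\int_{B^+}\nabla v\cdot\nabla\psi = -\int_{B^+} f(x)\,\partial_y\psi(x,y)\,dxdy,
\]
i.e.\ a divergence-form equation $\mathrm{div}(A\nabla v)=\mathrm{div}\,F$ with $F=(0,\dots,0,-f)$, to which the $W^{1,p}$, $W^{2,p}$ and $C^{1,\alpha}$ theorems in Troianiello's book apply directly. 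Your normal-coordinate choice (forcing the cross terms $A_{iN}$ to vanish on $\{\xi_N=0\}$) is one concrete way to implement what the paper calls ``conormal reflection''; both ensure the reflected coefficients are at least Lipschitz, which is all the cited theorems need.
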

\begin{proof}
The fact that $v\in C^{2,\alpha}(\overline{\Omega}\times(0,+\infty))$ follows from standard
regularity theory for the Laplace equation with homogeneous Neumann boundary conditions
on smooth domains. As far as (i) is concerned, due to the exponential decay of $v$ given by
Lemma \ref{test}, we are left to prove regularity near $\{y=0\}$.
To start with, for any $x_{0}\in\Omega$, let us consider any half-ball
$$
B^+=B^+_{R}(x_0,0) = \left\{(x,y):\,|(x-x_0,y)|<R,\,y>0\right\}\subset \mathC
$$
and let us introduce the notation
\[
H^{1;0}_0(B^+):=\left\{\psi\in H^1(B^+):\,\psi|_{\partial B^+\cap \{y>0\}}\equiv 0\right\},\quad
a(v,\psi):=\int_{B^+} \nabla v\cdot\nabla\psi \,dxdy.
\]

Since $v$ solves \eqref{equalin}, integration by parts yields
\begin{align}\label{eq:reg_easy}
a(v,\psi) & = \int_{B^{+}\cap\{y=0)\}}f(x)\psi(x,0)\,dx = \int_{\partial B^+} f(x)\psi(x,y)\,d\sigma \\
 & = -\int_{B^+} f(x)\partial_y \psi(x,y)\,dxdy,
\quad \forall\psi\in H^{1;0}_0(B^+). \nonumber
\end{align}
As a consequence, Theorem 3.14 in \cite{trobook}
implies that $v\in W^{1,p}(B^+_r)(x_0,0)$ for e\-ve\-ry $r<R$.
On the other hand, let $x_0\in \partial\Omega$. By assumption, there
exists an open neighborhood $U\ni(x_0,0)$ and a
$C^{2,\alpha}$-diffeomorphism $\Phi$ between $U\cap\{y>0\}$ and $B^+_1(0,0)$
which is the identity on the $y$-coordinate and such that $\Phi(x_0,0)=(0,0)$,
$\Phi(U\cap\mathC)=B^+\cap\{x_N<0\}$.
Let $\tilde  v = v \circ \Phi^{-1}$. Since $v$ is harmonic we have that $\tilde v$
satisfies an equation like \eqref{eq:reg_easy} on $B^+\cap\{x_N<0\}$, where
now the bilinear form $a$ has $C^{1,\alpha}$ coefficients
(which depend on $\Omega$ through the first derivatives of $\Phi$). Accordingly,
the conormal derivative of $\tilde v$ on $\{x_N=0\}$ vanishes.
Since $\Omega$ is $C^{2,\alpha}$, the last fact allows
to extend $\tilde v$ to the whole $B^+$ by (conormal) reflection, at least when
the initial neighborhood $U$ is sufficiently small; in a standard way, the extended
function satisfies again an equation like \eqref{eq:reg_easy}, and now the
corresponding $a$ has Lipschitz-continuous coefficients.
Furthermore, the analogous extension of $f$ is again $L^p$. As a consequence,
Theorem 3.14 in \cite{trobook} implies also in this situation that $\tilde v$, and hence $v$,
is $W^{1,p}(B^+_r)$ for $r<R$.
Taking into account the previous discussion, property (i) follows by a covering
argument. Finally, (ii) and (iii) can be proved with minor changes in the previous
argument, by using Theorems 3.15, 3.12 and 1.17 in \cite{trobook}.
\end{proof}

The previous proposition implies a number of regularity properties for
the inverse operator $T_{1/2}$. Analogous arguments yield improved
regularity also for the direct operator $L_{1/2}$.
\begin{proposition}\label{regarmo}
Let $\Omega$ be of class $C^{2,\alpha}$, and let $u\in \hmezzo$ be such that
\[
u\in C^{1,\alpha}(\overline{\Omega})\quad\text{ and }
\quad\partial_\nu u = 0 \text{ on }\partial\Omega.
\]
Finally, let $v\in\mathcal{H}^1(\mathC)$ be the Neumann harmonic extension
of $u$ according to Lemma \ref{armonic_extension}. Then
$v\in C^{1,\alpha}(\overline{\mathC})$, and
$\|v\|_{C^{1,\alpha}(\mathC)}\leq C(\Omega,\alpha)\|u\|_{C^{1,\alpha}(\Omega)}$.
\end{proposition}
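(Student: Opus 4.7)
My plan is to mirror the proof of Proposition \ref{prop:reg_inv} \emph{mutatis mutandis}, with two adjustments: on the flat face $\Omega\times\{0\}$ the boundary condition for $v$ is now of Dirichlet type with datum $u$, so I replace the argument based on \eqref{eq:reg_easy} by classical Schauder boundary estimates for the Dirichlet problem; and the compatibility hypothesis $\partial_{\nu} u = 0$ on $\partial\Omega$ is precisely what makes those Dirichlet estimates compatible with the conormal reflection used to handle the lateral face $\partial\Omega\times(0,+\infty)$. Interior $C^{2,\alpha}$ regularity follows from standard theory for harmonic functions, and the explicit series \eqref{est} yields smooth control of $v$ and its derivatives on $\{y\geq 1\}$ in terms of $\|u\|_{L^{2}}$.

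Thus I focus on a neighborhood of $\overline{\Omega}\times\{0\}$ and distinguish three types of points. First, around any $(x_{0},0)$ with $x_{0}\in\Omega$, on a sufficiently small half-ball $B^{+}:=B^{+}_{R}(x_{0},0)\subset\mathC$ the function $v$ solves the Dirichlet problem $\Delta v = 0$ in $B^{+}$ with $v=u\in C^{1,\alpha}$ on $\{y=0\}\cap B^{+}$; classical $C^{1,\alpha}$ Schauder estimates up to a flat face (e.g.\ Theorem 1.17 in \cite{trobook}) give the desired local $C^{1,\alpha}$ bound. Second, around any $(x_{0},y_{0})$ with $x_{0}\in\partial\Omega$ and $y_{0}>0$, the conormal-reflection argument used in Proposition \ref{prop:reg_inv} applies verbatim: a $C^{2,\alpha}$ diffeomorphism $\Phi$ (identity on $y$) flattens $\partial\Omega$ near $x_{0}$, $\tilde v=v\circ\Phi^{-1}$ satisfies an elliptic equation in divergence form with Lipschitz coefficients in $B^{+}\cap\{x_{N}<0\}$ with vanishing conormal derivative on $\{x_{N}=0\}$, and conormal reflection across $\{x_{N}=0\}$ extends $\tilde v$ to the full $B^{+}$ with an elliptic equation of the same type, to which interior Schauder applies.

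The main obstacle is the genuine corner, i.e.\ points $(x_{0},0)$ with $x_{0}\in\partial\Omega$. Here I combine both strategies: I straighten $\partial\Omega$ by the diffeomorphism $\Phi$ as above, work in the quadrant $B^{+}\cap\{x_{N}<0\}$, and extend $\tilde v=v\circ\Phi^{-1}$ by conormal reflection across $\{x_{N}=0,\,y>0\}$ to the whole half-ball $B^{+}$. The extended function solves an elliptic equation in divergence form with Lipschitz coefficients on $B^{+}$ and, crucially, its Dirichlet trace on $\{y=0\}$ is the conormal reflection of $\tilde u=u\circ\Phi^{-1}$ across $\{x_{N}=0\}$. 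This reflected trace belongs to $C^{1,\alpha}$ on the full disk $\{y=0\}\cap B^{+}$ precisely because $\partial_{\nu} u=0$ on $\partial\Omega$; without this compatibility the extended datum would have a jump in its normal derivative and be merely $C^{0,1}$, so the argument would collapse. With the $C^{1,\alpha}$ Dirichlet datum in hand, a further application of Schauder boundary estimates up to the flat face $\{y=0\}$ provides the local $C^{1,\alpha}$ bound on the extended $\tilde v$, and hence on $v$. Combining the three local estimates via a finite covering of $\overline{\Omega}\times[0,1]$ with the decay on $\{y\geq 1\}$ yields the global estimate $\|v\|_{C^{1,\alpha}(\overline{\mathC})}\leq C(\Omega,\alpha)\|u\|_{C^{1,\alpha}(\overline{\Omega})}$.
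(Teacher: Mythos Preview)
Your proof is correct, but the paper takes a slightly different route. Instead of working with $v$ and its nontrivial Dirichlet datum $u$ on $\{y=0\}$, the paper subtracts $u$ at the outset: setting $w(x,y):=v(x,y)-u(x)$, one has $w(\cdot,0)\equiv 0$ and, since $v$ is harmonic,
\[
a(w,\psi)=-\int_{B^{+}}\nabla_{x}u(x)\cdot\nabla_{x}\psi(x,y)\,dxdy\qquad\forall\,\psi\in H^{1}_{0}(B^{+}),
\]
so that the problem lands exactly in the framework of Proposition~\ref{prop:reg_inv}, with $\nabla_{x}u\in C^{0,\alpha}$ playing the role that $f$ had there; one then invokes \cite{trobook}, Theorem~3.12, on $w$ (and on its conormal extension $\tilde w$ near $\partial\Omega\times\{0\}$). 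Both arguments ultimately rely on the same conormal reflection at the corner, and both need the hypothesis $\partial_{\nu}u=0$ on $\partial\Omega$: in your version it guarantees that the reflected Dirichlet datum is $C^{1,\alpha}$ across $\{x_{N}=0\}$, while in the paper's version it is what makes the reflected divergence-form right-hand side remain $C^{0,\alpha}$. The paper's subtraction trick is more economical, as it literally reduces to the previous proposition; your direct approach has the virtue of making explicit exactly where and why the compatibility condition $\partial_{\nu}u=0$ enters.
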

\begin{proof}
It is sufficient to show that $w(x,y):=v(x,y)-u(x)$ is $C^{1,\alpha}(\overline{\mathC})$.
This can be done by following straightforwardly the proof of Proposition \ref{prop:reg_inv},
once one notices that, instead of equation \eqref{eq:reg_easy}, $w$ satisfies $w(x,0)=0$
and, as $v$ is harmonic,
\[
a( w,\psi) = \int_{B^+} -\nabla_x u(x)\cdot \nabla_x\psi(x,y)\,dxdy\quad
\forall\psi\in H^{1}_0(B^+),
\]
for every $B^+\subset\mathC$. Hence the role that $f$ had in the aforementioned
proposition is now played by $\nabla_x u$. Since $\nabla_x u$ is
$C^{0,\alpha}(\overline{\mathC})$, the proposition follows again by applying
\cite{trobook}, Theorem 3.12, to $w$ (or to suitable extensions
$\tilde w$ near $\partial\Omega\times\{0\}$).
\end{proof}

As a conclusion of this section we state the following result, which
will be useful in the applications

\begin{corollary}\label{coro:smooth_L}
Let us define the spaces
$$
X  :=\left\{ u\in  C^{1,\alpha}(\ov\Omega):\, A_{\Omega} u =0,\,
\partial_{\nu}u(x)=0\; \text{on}\;\, \partial \Omega\right\},\;
F:=\left\{ f\in  C^{0,\alpha}(\ov\Omega):\, A_{\Omega}f =0\right\}.
$$
Then the operators
\[
L_{1/2}\from X \to F,\qquad T_{1/2}\from F \to X
\]
are linear and continuous and $L_{1/2}\circ T_{1/2}=T_{1/2}\circ L_{1/2}=Id$.
\end{corollary}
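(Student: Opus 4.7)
The corollary is essentially a packaging of the existence/uniqueness results for $L_{1/2}$ and $T_{1/2}$ with the Hölder regularity estimates just proved, so the plan is to check each piece in turn.

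\textbf{Step 1: $T_{1/2}$ maps $F$ into $X$, continuously.} Fix $f\in F$. Since $f\in C^{0,\alpha}(\overline\Omega)\subset L^2(\Omega)$ with $\int_\Omega f\,dx=0$, we can interpret $f$ as an element of $\acca^{-1/2}(\Omega)$ via the $L^2$-pairing, and $\langle f,1\rangle = \int_\Omega f=0$. Lemma \ref{lem:weak-v} produces the unique $v\in\huno$ solving \eqref{linv}, and $T_{1/2}(f)=v(\cdot,0)\in\hmezzo$, so $A_\Omega T_{1/2}(f)=0$. Applying Proposition \ref{prop:reg_inv}(iii) to this $v$ yields $v\in C^{1,\alpha}(\overline\mathC)$ with $\|v\|_{C^{1,\alpha}}\leq C(\Omega,\alpha)\|f\|_{C^{0,\alpha}}$. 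Hence $u:=v(\cdot,0)\in C^{1,\alpha}(\overline\Omega)$ with the same type of estimate. The remaining condition $\partial_\nu u=0$ on $\partial\Omega$ follows from the lateral Neumann condition on $v$: since $v\in C^{1,\alpha}(\overline\mathC)$, the function $(x,y)\mapsto \nabla_x v(x,y)\cdot\nu_\Omega(x)$ is continuous on $\partial\Omega\times[0,+\infty)$ and vanishes for $y>0$, hence for $y=0$ as well, and at $y=0$ it agrees with $\partial_\nu u$.

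\textbf{Step 2: $L_{1/2}$ maps $X$ into $F$, continuously.} Fix $u\in X$. Then $u\in\hmezzo$ and the hypotheses of Proposition \ref{regarmo} are exactly the defining properties of $X$, so the Neumann harmonic extension $v$ belongs to $C^{1,\alpha}(\overline\mathC)$ with $\|v\|_{C^{1,\alpha}}\leq C(\Omega,\alpha)\|u\|_{C^{1,\alpha}}$. By Definition \ref{df:lmezzo}, $L_{1/2} u=-\partial_y v(\cdot,0)\in C^{0,\alpha}(\overline\Omega)$, with a norm controlled by $\|v\|_{C^{1,\alpha}}$ and hence by $\|u\|_{C^{1,\alpha}}$. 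The zero-mean condition $A_\Omega(L_{1/2}u)=0$ comes from the lemma preceding Definition \ref{df:lmezzo}: $L_{1/2}u\in\acca^{-1/2}(\Omega)$, so $\langle L_{1/2}u,1\rangle=0$, which for a $C^{0,\alpha}$ function is exactly $\int_\Omega L_{1/2}u\,dx=0$.

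\textbf{Step 3: Inverses.} Both identities $L_{1/2}\circ T_{1/2}=\mathrm{Id}$ and $T_{1/2}\circ L_{1/2}=\mathrm{Id}$ are already established by Proposition \ref{inverse} at the level of $\hmezzo\leftrightarrow\acca^{-1/2}(\Omega)$, and hence a fortiori on the smaller spaces $X$ and $F$.

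\textbf{Expected difficulty.} This statement is a direct consequence of the regularity propositions and the invertibility result, so no essentially new argument is needed. The only mildly delicate point is the verification of the compatibility conditions built into $X$ and $F$, namely $\partial_\nu u=0$ for $u\in T_{1/2}(F)$ (which I would derive from the $C^{1,\alpha}$ regularity up to the corner $\partial\Omega\times\{0\}$ combined with the lateral Neumann condition) and $A_\Omega(L_{1/2}u)=0$ (which I would read off from the duality identity $\langle L_{1/2}u,1\rangle=0$).
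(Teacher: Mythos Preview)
Your proof is correct and follows the same approach as the paper, which simply remarks that the corollary follows from Propositions~\ref{inverse} and~\ref{prop:reg_inv}. In fact, your write-up is more complete: you correctly identify that Proposition~\ref{regarmo} is also needed for the direction $L_{1/2}\colon X\to F$, and you spell out the verification of the compatibility conditions $\partial_\nu u=0$ and $A_\Omega(L_{1/2}u)=0$, both of which the paper leaves implicit.
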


\begin{proof}
The conclusion easily follows from Propositions \ref{inverse} and \ref{prop:reg_inv}.
\end{proof}
In the following we will be concerned with positive solutions of equations involving
the fractional operators defined above. In this perspective, the arguments we employed
to improve regularity allow to check the validity of suitable maximum principles and Hopf lemma.
In particular, the following strong maximum principle holds.
\bpr\label{maxi}
Let $c\in L^{\infty}(\Omega)$ and nonnegative. Every
$u\in C^{1,\alpha}(\overline{\Omega})$ satisfying
\beq\label{prmax}
\begin{cases}
\lapneu u + c(x)u\geq0 &  \text{ in } \Omega \\
u\geq 0  & \text{ in } \Omega,
\end{cases}
\eeq
is either identically zero or strictly positive on $\overline{\Omega}$.
\epr
\begin{proof}
Let us write $u=Z_\Omega u + A_\Omega u =: \tilde u + c_u$, and let $\tilde v$
denote the Neumann harmonic extension of $\tilde u$ to $\mathC$.
Then $v(x,y):=\tilde v(x,y) + c_u$ is harmonic, non-negative, and $v(x,0)=u(x)$.
Now, if $u(x_0)=0$ for some $x_0\in\Omega$, this would imply
$\partial_\nu v(x_0,0) = (-\Delta)^{1/2} u (x_0)\geq 0$, in contradiction with the
Hopf principle for harmonic functions. On the other hand, if $u(x_0)=0$ for
some $x_0\in\partial\Omega$, we can argue in the same way, considering instead of $v$
its conormal even extension, as in the proof of Proposition \ref{prop:reg_inv}.
\end{proof}

\bos\label{maxiv}
As a direct consequence of Proposition \ref{maxi} and of classical maximum principle
for harmonic functions, we deduce that, if $u>0$ satisfies \eqref{prmax}, then its harmonic
extension $v=\tilde v+c_u$ is positive in $\overline{\mathC}$.
\eos

\section{The Weighted Logistic Equation}\label{sec:logistica}
Our main application is the study of the positive solutions
of the nonlinear Problem \eqref{logim}, understood in terms of the operator
$\lapneu$.
To this aim, a necessary solvability condition is that the right hand side
of the equation has null average. On the other hand the possible solution $u$,
being positive, has positive average. In order to apply the theory developed in
the previous section, we recall that any $u\in H^{1/2}(\Omega)$ can be decomposed
as
\begin{equation}\label{eq:decomptilde}
u = A_\Omega u + Z_\Omega u =: \tilde u + c_u,
\end{equation}
where $c_u$ is constant and $\tilde u\in \hmezzo$. Using Lemma \ref{armonic_extension}
we can denote by $\tilde v$ the Neumann harmonic extension of $\tilde u$ to $\mathC$,
obtaining that
\[
v(x,y):= \tilde v(x,y) + c_u
\]
is harmonic and $v(x,0)=u(x)$. It is worthwhile noticing that, as far as $c_u\neq0$,
$v\not\in H^1(\mathC)$.

Taking into account the previous discussion, we can now define what we 
mean by a weak solution of a general nonlinear problem.
\bdf\label{nonlinear}\rm
A weak solution $u\from\Omega\to\R$ of the nonlinear problem
\beq\label{nonlinequa}
\begin{cases}
(-\Delta)^{1/2} u= f(x,u) & \hbox{in } \Omega, \\
\pa_{\nu} u=0 & \hbox{on } \pa\Omega,
\end{cases}
\eeq
is a function  $u\in H^{1/2}(\Omega)$ such that
$f(\cdot,u(\cdot))\in H^{-1/2}(\Omega)$ and
\[
\text{both }\quad A_\Omega f(\cdot, u) = 0
\qquad\text{and }\quad\lapneu u = f(\cdot,u).
\]
In particular $u(x)=v(x,0)$, where $v(x,y)=\tiv(x,y)+h$, and
$(\tiv,h)\in \huno\times \R$ is a
weak solution of the nonlinear problem
\beq\label{nonlinprob}
\begin{cases}
\Delta \tiv=0 & \hbox{in }\mathC,
\\
\pa_{\nu} \tiv=0 & \hbox{on } \pa\Omega\times(0,+\infty), \\
\pa_{\nu} \tiv= f(x,\tiv+h) & \hbox{on } \Omega\times\{0\},
\\
\dys \into f(x,\tiv(x,0)+h)dx=0,
\end{cases}
\eeq
in the sense that
\beq\label{nonlinweak}
\begin{cases}
\dys\int_{\mathC}\nabla \tiv\nabla \psi\,dxdy= \into f(x,\tiv+h)\psi\,dx
\qquad \forall\,\psi\in \huno,
\medskip\\
\dys\into f(x,\tiv(x,0)+h)dx=0.
\end{cases}
\eeq
\edf

Using the previous definition we can now rewrite Problem \eqref{logim} in the equivalent form
\begin{equation}\label{logim2}
\begin{cases}
\dys \lapneu u = \lambda u(m(x)-u),\\
\dys \lambda\int_\Omega u(m(x)-u)\,dx=0,
\end{cases}
\end{equation}
and we recall that we assume
\[
\lambda>0\qquad\text{ and }\qquad m\in C^{0,1}(\overline{\Omega}).
\]
\bos
In the standard diffusion case, nonlinear boundary data have been
frequently considered especially in the determination of
selection-migration problem for alleles in a region,
admitting flow of genes throughout the boundary (see
\cite{mana} and the references therein).
\eos
As in the classical literature concerning the logistic equation,
the comprehension of the linearized problem arises as crucial in the study.
In our context, this correspond to tackle the following weighted eigenvalue problem
\beq\label{eigen}
\begin{cases}
\dys \lapneu u = \lambda m(x)u,\\
\dys \lambda\int_\Omega m(x)u\,dx=0.
\end{cases}
\eeq
\begin{remark}\rm
When $m\equiv1$, the nontrivial solutions of
$$
\begin{cases}
\dys \lapneu\vfi =\la \vfi,
\\
\dys \int_\Omega\vfi\,dx=0,
\end{cases}
$$
are $\vfi_{k}=\phi_{k}$ associated to $\la_{k}=\sqrt{\mu_{k}}$
where $\phi_{k}$ and $\mu_{k}$ are respectively eigenfunctions
and eigenvalues of the usual Laplace operator $-\Delta$ with homogeneous
Neumann boundary conditions as in \eqref{autof}.
\end{remark}
\begin{remark}\label{rem:fredholm}\rm
Taking into account the usual decomposition $u=\tilde u+c_u$ as in \eqref{eq:decomptilde},
we have that Problem \eqref{eigen} can be written as
\[
\begin{cases}
\dys \tilde u - \lambda T_{1/2}(m(\tilde u + c_u))=0,\\
\dys \int_\Omega m(\tilde u + c_u)\,dx=0.
\end{cases}
\]
where $T_{1/2}$ is compact by Proposition \ref{inverse}. If moreover we assume
$\int_\Omega m  \neq 0$, we can solve the second equation for $c_u$ and infer
the equivalent formulation
\[
\tilde u - \lambda T_{1/2}\left(m\tilde u - \frac{\int_\Omega m\tilde u}{\int_\Omega m}m\right)=0.
\]
As a consequence, we can apply Fredholm's Alternative, obtaining that the spectrum of the
operator at the left hand side consists in a sequence of eigenvalues $(\lambda_k)_k$, with
associated kernel of dimension $d_k<+\infty$ and  closed range having codimension $d_k$.
\end{remark}
\begin{lemma}\label{infty}
Any nontrivial solution $u$ of Problem \eqref{eigen} is of class
$C^{1,\alpha}(\overline{\Omega})$. Moreover, $u\geq0$ implies $u>0$ in $\overline{\Omega}$.
\end{lemma}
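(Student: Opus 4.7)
The plan splits into two steps: a bootstrap argument to establish the $C^{1,\alpha}$ regularity, and a direct application of Proposition \ref{maxi} for the positivity claim.

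For regularity, I would decompose $u=\tilde u+c_u$ as in \eqref{eq:decomptilde}. Since $m\in L^{\infty}(\Omega)$ and the second equation in \eqref{eigen} enforces $mu\in\elle{2}$ when $\lambda\neq 0$ (the trivial case $\lambda=0$ forces $u$ to be a nonzero constant, which is already $C^{1,\alpha}$), Proposition \ref{inverse} rewrites the eigenvalue equation as $\tilde u=\lambda\,T_{1/2}(mu)$, i.e.\ the Neumann harmonic extension $\tilde v$ of $\tilde u$ solves \eqref{linv} with datum $\lambda m u$. Starting from $u\in H^{1/2}(\Omega)\hookrightarrow L^{2N/(N-1)}(\Omega)$, the product $mu$ sits in the same Lebesgue space, and Proposition \ref{prop:reg_inv}(i) promotes $\tilde v$ to $W^{1,p}(\mathC)$; trace theorems and fractional Sobolev embeddings then upgrade $u$ to a better Lebesgue space. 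A finite iteration of this step pushes the integrability exponent past $N+1$, at which point Sobolev embedding delivers $u\in C^{0,\beta}(\overline{\Omega})$ for some $\beta\in(0,\alpha]$. Since $m\in C^{0,1}(\overline{\Omega})$, the product $mu$ then lies in $C^{0,\beta}(\overline{\Omega})$, and Proposition \ref{prop:reg_inv}(iii) yields $\tilde v\in C^{1,\beta}(\overline{\mathC})$, hence $u\in C^{1,\beta}(\overline{\Omega})$; one further application of the same argument (now exploiting that $mu$ is Lipschitz, hence in $C^{0,\alpha}$) matches the exponent with the $\alpha$ coming from $\partial\Omega$, giving $u=v(\cdot,0)\in C^{1,\alpha}(\overline{\Omega})$.

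For the positivity, set $c(x):=\lambda\|m\|_{L^{\infty}(\Omega)}$; then $c\in L^{\infty}(\Omega)$ is nonnegative and
\[
\lapneu u+c(x)\,u=\lambda\bigl(m(x)+\|m\|_{L^{\infty}}\bigr)u\geq 0,
\]
because $u\geq 0$ and $m(x)+\|m\|_{L^{\infty}}\geq 0$ pointwise. Thus $u$ satisfies \eqref{prmax}, and Proposition \ref{maxi} yields the dichotomy $u\equiv 0$ or $u>0$ on $\overline{\Omega}$; the nontriviality assumption rules out the former.

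The main obstacle is the careful bookkeeping of the bootstrap in the first step: one must verify that the finite iteration of Proposition \ref{prop:reg_inv}(i) does push the integrability exponent into the range $p>N+1$, so that trace and Sobolev embeddings genuinely deliver Hölder continuity, and then that the final Hölder exponent can be matched with the $\alpha$ allowed by the $C^{2,\alpha}$ regularity of $\partial\Omega$ through Proposition \ref{prop:reg_inv}(iii). The maximum-principle step is essentially a direct citation and presents no genuine difficulty.
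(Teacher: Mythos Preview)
Your proposal is correct and follows essentially the same route as the paper: a bootstrap based on Proposition~\ref{prop:reg_inv} (trace plus Sobolev embeddings iterated on the harmonic extension $\tilde v$), followed by an appeal to Proposition~\ref{maxi} for the strict positivity. You are actually more explicit than the paper in handling the $\lambda=0$ case, in spelling out the passage from $L^p$ to H\"older regularity via part~(iii), and in writing down the auxiliary potential $c(x)$ for the maximum principle, but none of this departs from the paper's argument.
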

\begin{proof}
The proof relies on the classical bootstrap technique. Indeed, as above, let us write
$u=\tilde u+c_u$ and let us denote with $\tilde v$ the Neumann harmonic extension of $\tilde u$.
Since $m\in C^{0,1}(\overline{\Omega})$,
Proposition \ref{prop:reg_inv} and the trace and Sobolev embedding theorems imply
\begin{multline*}
\tilde v \in W^{1,r}(\mathC) \implies \tilde u \in W^{1-1/r,r}(\Omega) \implies
\tilde u\in L^{Nr/(N+1-r)}(\Omega) \\
\implies \lambda mu \in L^{Nr/(N+1-r)}(\Omega)  \implies
\tilde v \in W^{1,Nr/(N+1-r)}(\mathC),
\end{multline*}
whenever $2\leq r < N+1$. Starting from $r_0=2$ and iterating the above procedure the
first part of the proposition follows. As a consequence, the second one is implied by
Proposition \ref{maxi}.
\end{proof}
Searching for positive solutions of \eqref{logim2}, we are interested in positive eigenfunctions
of \eqref{eigen}. Of course, $\la=0$ is always eigenvalue with normalized eigenfunction
$\vfi_{0}=1/\sqrt{|\Omega|}>0$, but this does not prevent the existence of positive eigenfunctions
associated with positive eigenvalues.
\begin{lemma}\label{infty2}
If there exists a positive eigenvalue $\la_{1}$ with a positive
eigenfunction $\vfi_{1}$ then the function $m$ is such that
\beq
\label{ipomneg}
\into m(x)dx<0\quad\text{ and }\quad \exists\, x_{0}\in
\Omega\text{ such that }m(x_{0})>0.
\eeq
\end{lemma}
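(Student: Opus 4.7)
The plan is to exploit the positivity of $\varphi_1$ together with the local realization via the Neumann harmonic extension developed in Section~\ref{sec:funct_setting}, and split the argument into two parts, corresponding to the two conclusions in \eqref{ipomneg}.

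First, I would read off the integral constraint in \eqref{eigen}: since $\lambda_1>0$ it gives $\into m\varphi_1\,dx=0$. By Lemma~\ref{infty} one has $\varphi_1\in C^{1,\alpha}(\overline\Omega)$ and $\varphi_1>0$ on $\overline\Omega$, so if $m(x)\le 0$ for every $x\in\Omega$ then $m\varphi_1\le 0$ with vanishing integral would force $m\varphi_1\equiv 0$, hence $m\equiv 0$. In that case the equation would reduce to $\lapneu\varphi_1=0$, and by the invertibility of $L_{1/2}$ on $\hmezzo$ stated in Proposition~\ref{inverse} the only solutions are the constants, so that any $\lambda>0$ would trivially be an ``eigenvalue''; this degenerate situation is incompatible with having a genuine positive eigenvalue, and is excluded by the standing hypothesis $m\not\equiv 0$. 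This step already yields the second assertion of \eqref{ipomneg}.

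For the mean condition, I would write $\varphi_1=c+\tilde\varphi_1$ with $c:=A_\Omega\varphi_1>0$ and $\tilde\varphi_1\in\hmezzo$, call $\tilde v\in\huno$ the Neumann harmonic extension of $\tilde\varphi_1$ (Lemma~\ref{armonic_extension}), and set $v:=c+\tilde v$, so that $v$ is harmonic in $\mathcal C$ with $v(\cdot,0)=\varphi_1$. Since $\varphi_1$ satisfies $\lapneu\varphi_1+\lambda_1\|m\|_\infty\varphi_1=\lambda_1(m+\|m\|_\infty)\varphi_1\ge 0$, Remark~\ref{maxiv} gives $v>0$ in $\overline{\mathcal C}$; combined with the $C^{1,\alpha}(\overline{\mathcal C})$-regularity of Proposition~\ref{regarmo} and the exponential decay of $\tilde v$ from Lemma~\ref{test}(i), this implies $\inf_{\overline{\mathcal C}}v>0$. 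The key step is then to use $\psi:=1/v$ as a test function in the weak form of $L_{1/2}\tilde\varphi_1=\lambda_1 m\varphi_1$: the bounds above give $\nabla\psi=-\nabla v/v^2\in L^2(\mathcal C)$ and a uniform-in-$y$ control of $\|\psi(\cdot,y)\|_{L^2(\Omega)}$, so Lemma~\ref{test}(ii) ensures admissibility. This produces
\[
-\intc\frac{|\nabla v|^2}{v^2}\,dxdy
 =\intc\nabla v\cdot\nabla(1/v)\,dxdy
 =\lambda_1\into m\varphi_1\cdot\frac{1}{\varphi_1}\,dx
 =\lambda_1\into m\,dx,
\]
so $\into m\,dx\le 0$; and equality would force $\nabla v\equiv 0$, hence $\varphi_1$ constant, reducing the eigenvalue equation to $\lambda_1 m\varphi_1\equiv 0$, contradicting $m\not\equiv 0$. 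Thus $\into m\,dx<0$.

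The main obstacle will be the justification of $\psi=1/v$ as an admissible test: it rests simultaneously on the strict positivity of $v$ up to $\{y=0\}$ and $\partial\Omega\times[0,\infty)$ (Remark~\ref{maxiv}), on the $C^{1,\alpha}$-regularity of $v$ up to the Lipschitz boundary of $\mathcal C$ (Proposition~\ref{regarmo}), and on the enlargement of the admissible test-function class in Lemma~\ref{test}(ii) exploiting the exponential decay of $\nabla v$. Once these prerequisites are in place, the algebraic identity $\nabla v\cdot\nabla(1/v)=-|\nabla v|^2/v^2$ delivers the required sign for free.
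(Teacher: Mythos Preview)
Your proof is correct and follows essentially the same route as the paper's: deduce $\int_\Omega m\varphi_1=0$ from the integral constraint in \eqref{eigen}, conclude that $m$ must be positive somewhere, and then test with $1/v$ in the extended problem to obtain $-\int_{\mathcal C}|\nabla v|^2/v^2=\lambda_1\int_\Omega m$. The paper is terser, simply citing Lemma~\ref{test} and Remark~\ref{maxiv} to justify the test function $\psi=1/v_1$ and leaving the strict inequality implicit, whereas you spell out the regularity, positivity and decay ingredients explicitly; these elaborations are all sound and do not change the underlying argument.
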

\bdim
Supposing that there exists $\la_{1}>0$  with positive nonconstant
eigenfunction $\vfi_{1}$,
we can apply Lemma \ref{test} and use $\vfi_{0}=1/\sqrt{|\Omega|}>0$ as a test function in the
weak formulation of \eqref{eigen} satisfied by
$\vfi_{1}$ to obtain
$$
0=\la_{1}\into m(x)\vfi_{1}(x)\,dx.
$$
As $\vfi_{1}$ is positive, $m$ has to change sign. Now,
taking advantage of the usual decomposition,
let us write $\vfi_{1}(x)=v_{1}(x,0)=\tiv_{1}(x,0)+h$. From Remark \ref{maxiv},
we deduce that $v_{1}>0$ on $\overline{\mathC}$, and Lemma \ref{test} allows to use
$\psi=1/v_{1}$ as test function in the equation satisfied by $\vfi_{1}$. We obtain
\[
-\int_{\mathC} \left|\frac{\nabla v_1}{v_1}\right|^{2}\,dxdy = \lambda_1 \int_\Omega m(x)dx,
\]
and the lemma follows.
\edim
The following result shows that the previous necessary condition is also  sufficient
in order to obtain the existence of a first positive eigenvalue with positive eigenfunction.
%
\bte \label{primoautopos}
Let us suppose that $m\in C^{0,1}(\overline{\Omega})$
satisfies condition \eqref{ipomneg}.
Then
\begin{itemize}
 \item[(i)] there exists $\la_{1}>0$ and $\vfi_1\in C^{1,\alpha}(\overline{\Omega})$,
 $\vfi_{1}>0$ in $\overline{\Omega}$, solution of \eqref{eigen} with $\la=\la_{1}$;
 \item[(ii)] $\la_{1}$  is  the smallest positive eigenvalue, it is simple and any other solution
of \eqref{eigen} with $\la>\la_{1}$ changes sign.
\end{itemize}
\ete
\bdim
We will find $\vfi_{1}$ solving the extension Problem
\eqref{nonlinprob}, via a  constrained minimization.
Namely, we look for a minimum of the functional
$E\from\huno\times\R\to\R$ defined by
$$
E(\tilde v,h)=\frac12\intc |\nabla \tiv|^{2}dxdy
$$
constrained on the manifold
\beq\label{defM}
{\mathcal M}={\big\{}(\tiv,h)\in \huno\times \R\,:\,
\into m(x)(\tiv(x,0)+h)^{2}dx=1{\big \}}.
\eeq
First, let us observe that ${\mathcal M}\neq \emptyset$, indeed,
from \eqref{ipomneg} we can find $\omega$ an open set of positive
measure such that $m(x)>0$ in $\omega$, so that
for any $\tilde w\in C^{\infty}_{c}(\omega)$ having zero mean there exists a
suitable positive real number $C$, such that $(w/\sqrt{C},0)$
belongs to ${\mathcal M}$.
Let us consider a sequence $(\tiv_{n},h_{n})\in {\mathcal M}$
such that
$$
E(\tiv_{n},h_{n})\to \inf_{{\mathcal M}}E\geq 0.
$$
From the definition of $E$ it follows that $\tiv_{n}$ is uniformly bounded
in $\huno$, so that $\tiv_{n}(x,0)$ is uniformly bounded in
$H^{1/2}(\Omega)$; by the compact embedding of the trace
space $H^{1/2}(\Omega)$ in $L^{2}(\Omega)$ we obtain that
there exists $\tiv_{1}\in \huno$ such that $\tiv_{n}$ tends to $\tiv_{1}$ weakly
in $\huno$, and $\tiv_{n}(x,0)$ strongly converges to $\tiv_{1}(x,0)$ in
$L^{2}(\Omega)$. As far as the sequence $h_{n}$ is concerned,
let us show that it is bounded by contradiction, assuming that,
up to a subsequence, $h_{n}\to+\infty$ (the case $h_{n}\to -\infty$
can be handled analogously). By the definition of ${\mathcal M}$
it follows
$$
\lim_{n\to +\infty}h_{n}\into m(x)dx=-2\into m(x)\tilde v_n(x,0)dx
+\text{\small o}(1),
$$
where {\small o}$(1)$ denotes a quantity tending to zero as
$n$ goes to infinity.
Then, a contradiction follows from \eqref{ipomneg}.
As a consequence, there exists $h_{1}$ such that
$h_{n}\to h_{1}$ in $\R$. By weak lower
semicontinuity of $E$, it results that the pair $(\tiv_{1},h_{1})$ satisfies
\beq\label{inf}
E(\tiv_{1},h_{1})=\inf_{{\mathcal M}}E,\qquad \into m(x)(\tiv_{1}(x,0)+h_{1})^{2}dx=1.
\eeq
Moreover, let us show $E(\tiv_{1},h_{1})>0$. Again by contradiction, let us assume
that $E(\tiv_{1},h_{1})=0$; then, as $\tiv_{1}\in \huno$, it follows $\tiv_{1}=0$, and from
\eqref{inf} we obtain that
$$
h_{1}^{2}\into m(x)dx=1,
$$
so that \eqref{ipomneg} yields again a contradiction.
Since $(\tiv_{1},h_{1})$ is a constrained minimum point of $E$ on ${\mathcal M}$,
there exists $\la\in \R$ such that, by Proposition \ref{test}, for every
$\psi\in H^{1}_{\rm loc}(\ov{\mathC})$ satisfying \eqref{testequa}, it holds
\begin{align}\label{eqauto1}
\intc \nabla \tiv_{1}(x,y)\nabla \psi(x,y)\, dxdy =\la \into & m(x)(\tiv_{1}(x,0)+h_{1})\psi(x)dx,
\\
\label{eqauto2}\into m(x)(\tiv_{1}(x,0)+h_{1})dx &=0.
\end{align}

Choosing $\psi=\tiv_{1}+h_{1}$ we infer
$$
\inf_{{\mathcal M}}E=E(\tiv_{1}, h_1)=\intc |\nabla \tiv_{1}|^{2}\, dxdy =
\la \into m(x)(\tiv_{1}(x,0)+h_{1})^{2}dx=\la,
$$
thus $\la>0$ and  we can define
\beq\label{defla1}
\la_{1}:=\la=\dys\inf_{{\mathcal M}}E=E(\tiv,h_1)\quad\text{ and }\quad
\vfi_{1} (x) :=v_{1}(x,0)=\tiv_{1}(x,0)+h_{1}
\eeq
its corresponding eigenfunction, which is a weak solution of Problem \eqref{eigen}.
As a consequence, Lemma \ref{infty} implies that
$\vfi_1\in C^{1,\alpha}(\overline{\Omega})$ for every $\alpha\in(0,1)$.
Since any other solution $(\la,v)$ with
$\la>0$  of \eqref{eigen}
corresponds to a constrained critical point of $E$ on ${\mathcal M}$,
$\la_{1}$ is the smallest positive eigenvalue. In order to show
that  $\vfi_{1}$ can be chosen positive, let us
take  $w(x)=|\vfi_{1}(x)|=|\tiv_{1}(x,0)+h|$. Writing
$w(x)=\tiw(x)+c_{w}$, with $\tiw\in \hmezzo$ and $c_{w}$ constant,
let us consider $\tilde{\zeta}(x,y)\in \huno$ the harmonic extension of
$\tiw(x)$ obtained thanks to Lemma \ref{armonic_extension}.
Notice that $(\tilde{\zeta},c_{w})\in {\mathcal M}$; moreover
$\tilde{\zeta}(x,0)=|\tiv_1(x,0)+h|-c_{w}=z(x,0)$ for $z(x,y)=|\tiv_1(x,y)+h|-c_{w}$
so that, thanks to Remark \ref{rem:equiv_norms_in_Hmezzo}
$$
E(\tiw)\leq E(z)=\intc|\nabla|\tiv_1(x,y)+h||^{2}\leq \intc |\nabla \tiv_1|^{2}=E(\tiv_1).
$$
As a consequence, also the nonnegative function
$w$ solves the minimization
Problem \eqref{inf}, showing that we can assume,
without loss of generality, that $\vfi_1$ is nonnegative. But then Lemma \ref{infty}
applies again, yielding $\vfi_1>0$ on $\overline{\Omega}$.
It is possible to show that $\la_{1}$ is simple
by contradiction, supposing that there exists $\vfi_{1}$ and $u$
solutions of \eqref{defla1},
with $\vfi_{1}(x)=v_{1}(x,0)=\tiv_{1}(x,0)+h$,
$u(x)=w(x,0)=\tiw(x,0)+k$. From Remark \ref{maxiv},
we deduce that $v_{1}(x,y)>0$ in $\mathC$,
so that we can use $\psi(x,y)=w^{2}(x,y)/v_{1}(x,y)$ as
test function in the equation satisfied by $\vfi_{1}$, obtaining
$$
\la_{1}\into\!\!\!\! m(x)u^{2}(x)dx=\intc\!\! \nabla v_{1}(x,y)
\left[2\frac{w(x,y)}{v_{1}(x,y)}\nabla w(x,y)
-\left(\frac{w(x,y)}{v_{1}(x,y)}\right)^{2}\!\!\!\!\nabla v_{1}(x,y)
\right]\!dxdy.
$$
This implies
\begin{align*}
\la_{1}\into m(x)u^{2}(x)dx &=-\intc {\Big |}\nabla w(x,y)
-\frac{w(x,y)}{v_{1}(x,y)}\nabla v_1(x,y){\Big |}^{2}dxdy
\\
&+\la_{1}\into m(x)u^{2}(x)dx,
\end{align*}
that is
$$
0=-\intc {\Big |}\nabla w(x,y)-\frac{w(x,y)}{v_{1}(x,y)}
\nabla v_1(x,y)
{\Big |}^{2}dxdy=\intc v_{1}^{2}(x,y){\Big |}\nabla 
\left(\frac{w(x,y)}{v_{1}(x,y)}\right){\Big |}^{2}dxdy,
$$
yielding the linear dependence between $\vfi_{1}$ and $u$.
Moreover, it is possible to follow the same argument as in \cite{mi} to obtain that also the
algebraic multiplicity of $\la_{1}$ is one.
\\
Now we come to part (ii). In order to show that there is not a positive solution
$u$ of Problem \eqref{eigen} associated to
$\la>\la_{1}$,  let us argue again by contradiction,
and suppose that there exists $u(x)=w(x,0)=\tiw(x,0)+k$
positive eigenfunction associated to an eigenvalue $\la$
greater than $\la_{1}$.
As before, observe that Remark \ref{maxiv}
allows to choose as test function
$\psi(x,y)=v_{1}(x,y)^{2}/w(x,y)$
in the equation satisfied by $u$ and obtain
$$
\la\into\!\!\! m(x)\vfi_{1}^{2}(x)dx=\intc\!\!\! \nabla w(x,y)\left[2
\frac{v_{1}(x,y)}{w(x,y)}\nabla v_{1}(x,y)-
\left(\frac{v_{1}(x,y)}{w(x,y)}\right)^{2}\nabla w(x,y)\right]\!\!dxdy,
$$
which gives
\begin{align*}
(\la-\la_{1})\into m(x)\vfi_{1}^{2}(x)dx =-\intc {\Big |}\nabla
v_{1}(x,y)-\frac{v_{1}(x,y)}{w(x,y)}\nabla w(x,y){\Big |}^{2}dxdy
\end{align*}
and \eqref{inf} immediately implies that $\la<\la_{1}$.
\edim
\bos\label{autopos}
Notice that changing $m$ in $-m$ we can prove that there exists
a first eigenvalue $\la_{-1}<0$ with a positive eigenfunction $\vfi_{1}$
when the following condition holds
\beq\label{ipompos}
\into m(x)dx>0,
\qquad
\exists x_{0}\in \Omega, \quad \text{such that}\quad  m(x_{0}) <0.
\eeq
Moreover, arguing as in the end of Theorem \ref{primoautopos}
it is possible to prove that there are not positive eigenvalues
with positive eigenfunctions, when $m$ has positive mean.
\eos

We are finally in the position to tackle the logistic equation
\eqref{logim2}; let us start our study with the  easy
observation concerning the autonomous problem, i.e. $m\equiv1$,
contained in the following proposition.
\bpr\label{equacost}
If $m\equiv 1$ then every non-negative solution $u$  
of \eqref{logim2} is either $u=0$ or $u=1$.
\epr

\bdim
Let $u(x)=v(x,0)=\tiv(x,0)+c_{u}$ be a solution of \eqref{logim2}.
Lemma \ref{test} implies that we can choose as test function
$\psi(x,y)=v(x,y)-1$. Since $\nabla \psi=\nabla v$ we obtain
\[
0\leq \int_{\mathC}|\nabla v(x,y)|^2dxdy
=
-\la \into u(x)(u(x)-1)^{2}dx \leq 0,
\]
and the lemma follows.
\edim
Back to the nonautonomous case, the next two results provide
a priori bounds on the set of the positive solutions of \eqref{logim2}
and on the set of the parameters $\la$.
\begin{lemma}\label{limitate}
Let
$$
M=\sup_{\Omega}m^+.
$$
Then any positive solution of \eqref{logim2} satisfies
\beq\label{stimaM}
u(x)\leq M.
\eeq
In particular, if $m(x)\leq 0$ then no positive solution exists.
\end{lemma}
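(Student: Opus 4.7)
The plan is a truncation test-function argument on the weak formulation. Let $u>0$ be a positive solution of \eqref{logim2}, decomposed as $u=\tiu+c_{u}$, and let $v=\tiv+c_{u}$ be its associated Neumann harmonic extension to $\mathC$. A preliminary bootstrap step (mimicking the proof of Lemma \ref{infty}, this time applied to the datum $\la u(m-u)$ and relying on Proposition \ref{prop:reg_inv}) guarantees that $u\in C^{1,\alpha}(\ov\Omega)$ and that $v$ is bounded on $\ov{\mathC}$.

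The core step is to test the weak formulation \eqref{nonlinweak} with $\psi:=(v-M)^{+}$. Since $v\in H^{1}_{\mathrm{loc}}(\ov{\mathC})$ is bounded, the function $\psi$ belongs to $H^{1}_{\mathrm{loc}}(\ov{\mathC})$ and trivially satisfies the slicewise bound \eqref{testequa}; Lemma \ref{test}(ii) therefore makes $\psi$ an admissible test function. Exploiting $\nabla\tiv=\nabla v$ and $\nabla\psi=\chi_{\{v>M\}}\nabla v$, the equation collapses to
\[
\int_{\{v>M\}}|\nabla v|^{2}\,dxdy \;=\; \into \la\,u(m-u)(u-M)^{+}\,dx.
\]

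The sign analysis then closes the argument. The left-hand side is clearly non-negative. On the set $\{u>M\}$ one has $m(x)\leq \sup m^{+}=M<u(x)$ together with $u>0$, so the integrand $\la u(m-u)(u-M)$ is strictly negative there; outside this set the integrand vanishes. Hence the right-hand side is $\leq0$, and the equality above forces both integrals to vanish. Strict negativity of the integrand on $\{u>M\}$ therefore yields $|\{u>M\}|=0$, and by continuity $u(x)\leq M$ for every $x\in\ov\Omega$, which is exactly \eqref{stimaM}. The last assertion is immediate: if $m\leq0$ then $M=0$, and \eqref{stimaM} would give $u\leq0$, contradicting positivity.

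The only real obstacle is the admissibility of the non-zero-mean truncation $\psi=(v-M)^{+}$ in the weak formulation of the nonlinear problem; this is precisely the situation Lemma \ref{test}(ii) is designed to handle, once the (elementary) boundedness of $v$ is secured through the regularity results already collected in Section \ref{sec:funct_setting}.
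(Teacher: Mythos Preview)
Your approach is essentially the paper's own: test the weak formulation against $(v-M)^{+}$ and read off the sign contradiction. The computation and the sign analysis are correct.

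One point deserves a comment. Your ``preliminary bootstrap step'' to obtain $u\in C^{1,\alpha}$ and $v$ bounded is both unnecessary and slightly delicate in this logical position. In the paper, regularity of nonlinear solutions (Corollary \ref{coro:smooth_sols}) is obtained \emph{after} and \emph{from} Lemma \ref{limitate}; doing a direct bootstrap on the quadratic datum $\la u(m-u)$ beforehand is not as immediate as in Lemma \ref{infty}, since $u\in L^{p}$ only yields $u(m-u)\in L^{p/2}$ and Proposition \ref{prop:reg_inv}(i) is stated for $p\geq2$. Fortunately none of this is needed: the slicewise bound \eqref{testequa} for $\psi=(v-M)^{+}$ follows directly from $\tiv\in\huno$ and the Fourier representation \eqref{est} of the Neumann harmonic extension, which gives
\[
\|\tiv(\cdot,y)\|_{L^{2}(\Omega)}^{2}=\sum_{k\geq1}u_{k}^{2}e^{-2\mu_{k}^{1/2}y}\leq\|\tiu\|_{L^{2}(\Omega)}^{2},
\]
so that $\|(v-M)^{+}(\cdot,y)\|_{L^{2}}\leq\|\tiv(\cdot,y)\|_{L^{2}}+|c_{u}|\,|\Omega|^{1/2}$ is bounded uniformly in $y$. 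This is what the paper's terse ``we use Lemma \ref{test}'' is implicitly invoking. With that adjustment your argument matches the paper's; the conclusion $u\leq M$ then holds a.e., and the final continuity upgrade can be deferred to Corollary \ref{coro:smooth_sols}.
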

\bdim
Let $u(x)=v(x,0)=\tiv(x,0)+c_{u}$ be a positive solution of \eqref{logim2}.
We use Lemma \ref{test} to choose as test function
$\vfi(x,y)=(v(x,y)-M)^{+}$ and obtain
$$
\int_{{\mathcal C}\cap {\mathcal C}^{+}}\!\!\!|\nabla \tiv (x,y)|^{2}
={\la}\into
f(x,u(x))(u(x)-M)^{+}
\leq
-\la\into[(u(x)-M)^{+}]^{2}u(x),
$$
where ${\mathcal C}^{+}=\{(x,y)\in {\mathcal C}\,:\,v(x,y)
\geq M\}$ and $f(x,s)=s(m(x)-s)$.
Since the left hand side is non-negative, we obtain that $[(u(x)-M)^{+}]^{2}u(x)=0$
a.e., and the lemma follows.
\edim
\begin{corollary}\label{coro:smooth_sols}
Any nonnegative weak solution of \eqref{logim2} is $C^{1,\alpha}(\overline{\Omega})$  
and strictly positive on $\overline{\Omega}$.
\end{corollary}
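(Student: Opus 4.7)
The plan is to first upgrade the regularity of $u$ by a bootstrap based on Proposition~\ref{prop:reg_inv}, and then to deduce strict positivity from the maximum principle Proposition~\ref{maxi}.

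First, I would invoke Lemma~\ref{limitate}: any nonnegative weak solution satisfies $0\le u(x)\le M$ with $M=\sup_\Omega m^+$, so $u\in L^\infty(\Omega)$. Writing $f(x):=\lambda u(x)(m(x)-u(x))$, boundedness of $u$ and $m$ gives $f\in L^\infty(\Omega)$, and the compatibility condition built into Definition~\ref{nonlinear} yields $\int_\Omega f\,dx=0$. Hence $f\in\elle{p}$ for every finite $p$.

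Next, decompose $u=\tilde u+c_u$ as in \eqref{eq:decomptilde} and let $\tilde v\in\huno$ be the solution of \eqref{equalin} with datum $f$, so that $\tilde u=\tilde v(\cdot,0)$. Proposition~\ref{prop:reg_inv}~(i) delivers $\tilde v\in W^{1,p}(\mathC)$ for every $p<+\infty$; for $p$ large, trace and Sobolev embeddings then give $\tilde u\in C^{0,\alpha}(\overline{\Omega})$, so $u\in C^{0,\alpha}(\overline{\Omega})$. Since $m\in C^{0,1}(\overline{\Omega})\subset C^{0,\alpha}(\overline{\Omega})$, the product $f=\lambda u(m-u)$ now lies in $C^{0,\alpha}(\overline{\Omega})$. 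A second application, this time of Proposition~\ref{prop:reg_inv}~(iii), promotes $\tilde v$ to $C^{1,\alpha}(\overline{\mathC})$, and therefore $u\in C^{1,\alpha}(\overline{\Omega})$. Note that because of the a priori $L^\infty$ bound of Lemma~\ref{limitate}, the bootstrap closes in just two steps, without the iterative scheme used in Lemma~\ref{infty}.

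For the strict positivity I would rewrite the equation as
\[
\lapneu u + c(x)\,u = K u,
\qquad
c(x):=K+\lambda u(x)-\lambda m(x),
\]
with $K>0$ chosen large enough (for instance $K\ge \lambda\|m\|_{L^\infty}$) so that $c\ge 0$ and $c\in L^\infty(\Omega)$. Since $u\ge 0$ the right-hand side is nonnegative and, by the previous step, $u\in C^{1,\alpha}(\overline{\Omega})$; hence Proposition~\ref{maxi} applies and concludes that $u$ is either identically zero or strictly positive on $\overline{\Omega}$, which is the required dichotomy.

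Essentially no real obstacle arises: the regularity theory of Proposition~\ref{prop:reg_inv} is tailored for exactly this kind of bootstrap, and the only minor care needed is to absorb the (possibly negative) linear term $-\lambda m(x)u$ into a nonnegative potential by adding a suitable constant so that Proposition~\ref{maxi} applies.
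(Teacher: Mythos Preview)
Your proof is correct and follows exactly the route sketched in the paper, which simply cites Lemma~\ref{limitate}, Proposition~\ref{prop:reg_inv} and Proposition~\ref{maxi}; you have merely spelled out the two-step bootstrap and added the harmless shift by a constant $K$ to ensure that the potential $c$ is nonnegative so that Proposition~\ref{maxi} applies verbatim.
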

\bdim
This is an easy consequence of Lemma \ref{limitate}, and Propositions
\ref{prop:reg_inv} and  \ref{maxi}.
\edim
Concerning the set of the parameters $\la$ the following necessary
condition holds.
\ble\label{lambda}
Assume \eqref{ipomneg}. Then,
if there exists a positive solution of \eqref{logim2}, then $\la>\la_{1}$.
\ele
\bdim
Let $u(x)=v(x,0)=\tiv(x,0)+c_{u}$, be a solution of equation
\eqref{eigen} and let $v_{1}=\tiv_{1}+h$ satisfying
\eqref{eqauto1} and \eqref{eqauto2}.
By Corollary \ref{coro:smooth_sols}
we can take as a test function in the equation satisfied by $v_{1}$,
$\psi(x,y)=v^{2}(x,y)/v_{1}(x,y)$ to obtain
$$
\intc \!\!\!\!\nabla v_{1}(x,y) \left[2\frac{v(x,y)}{v_{1}(x,y)}
\nabla v_{1}(x,y)-\Big(\frac{v(x,y)}{v_{1}(x,y)}\Big)^{2}
\nabla v_{1}(x,y)\right]dxdy
=\la_{1}\!\!\into\!\! m(x)u^{2}(x)dx
$$
and by using \eqref{logim2}
\begin{align*}
0&\geq -\la_{1}\into u^{3}(x)dx-\intc {\Big|}\nabla v(x,y)-
\frac{v(x,y)}{v_{1}(x,y)}\nabla v_{1}(x,y){\Big |}^{2}dxdy
\\
&= \left(\frac{\la_{1}}{\la}-1\right)\intc \nabla |v(x,y)|^{2}dxdy,
\end{align*}
providing the conclusion.
\edim

We will obtain existence results for Problem \eqref{logim2} via Bifurcation
Theory; deve\-loping this approach we have to take into account
that every solution may have a constant component that is invisible in
the differential part of the equation, then in order to make this component
appear, we will be concerned with the map $G\from\R\times\R\times X
\to Y $ where $X$ is defined in Corollary \ref{coro:smooth_L},
$Y$ is defined as
$$
Y=\left\{ (w,t) \in C^{0,\alpha}(\ov\Omega)\times\R,\; t=\into w(x)dx,
\right\}
$$
and $G$ has components $G_{1}(\la,h,\tiu)$ and $G_{2}(\la,h,\tiu))$
given by
\beq\label{defG}
\begin{cases}
G_{1}(\la,h,\tiu)=-L_{1/2}(\tiu)+\la f(x,\tiu+h), &
\medskip\\
\dys G_{2}(\la,h,\tiu)=\into G_{1}(\la,h,\tiu)dx=\into \la f(x,\tiu+h)dx ,
\end{cases}
\eeq
for $f(x,s)=s(m(x)-s)$. Let us remark that, since $\int_\Omega G_1 dx = G_2$,
we have that the elements in the range of $G$ automatically satisfy the condition 
in the definition of $Y$.
Moreover, thanks to Corollary \ref{coro:smooth_sols}, the zeroes of $G$ correspond to
solutions of Problem \eqref{logim2}.  Of course, we are interested in nontrivial solutions.
\begin{definition}\label{defi:bif_sets}\rm
We denote the \emph{sets of trivial solutions} of $G(\la,h,\tiu)=0$ as
\[
\mathT_1 := \left\{(\la,0,0):\lambda>0\right\},\qquad
\mathT_2 := \left\{(0,h,0):h>0\right\},
\]
and the \emph{set of positive solutions} as
\[
\mathS := \left\{(\la,h,\tiu):\lambda>0,\,\tiu+h>0\text{ in }\overline{\Omega}\right\}.
\]
\end{definition}
\begin{remark}\label{rem:casi_banali}\rm
We observe that if $m\leq0$ on $\overline{\Omega}$ then Lemma \ref{limitate} 
implies $\mathS=\emptyset$.
On the other hand, if $m$ is a positive constant, reasoning as in 
Lemma \ref{equacost} we infer that
$\mathS = \left\{(\la,m,0):\lambda>0\right\}$. As a consequence, 
in the following we can assume without loss
of generality that $m$ is not constant and is positive somewhere.
\end{remark}
The following local bifurcation result is concerned with the case
of negative mean of the function $m$.
\bpr\label{bifurcation1}
Let condition \eqref{ipomneg} hold and let $\la_1$ be defined as in
Theorem \ref{primoautopos}. Then $(\la_{1},0,0)$ is a bifurcation point
of positive solutions of Problem \eqref{logim2} from $\mathT_1$,
and it is the only one. Moreover, locally near such point, $\mathS$ is
a unique $C^1$ cartesian curve, parameterized by $\lambda\in(\lambda_1,\lambda_1+\delta)$,
for some $\delta>0$.
\epr
\bdim
The proof relies on classical results about the local bifurcation from
a simple eigenvalue, see for example \cite{ampr}, Chapter 5, Theorem 4.1.\\
The derivative of $G$ with respect to the pair $(h,\tiu)$
 has components
\begin{align}\label{Gderla1}
\partial_{(h,\tiu)}G_{1}(\la,h,u)[k,\tiw]&=-L_{1/2}\tiw+\la f'_{s}(x,\tiu+h)(\tiw+k),
\\
\label{Gderla2}
\partial_{(h,\tiu)}G_{2}(\la,h,u)[k,\tiw]&=\la\into f'_{s}(x,\tiu+h)(\tiw+k)dx,
\end{align}
which, evaluated at the triplet $(\la,0,0)$, gives
$$
\partial_{(h,\tiu)}G(\la,0,0)[k,\tiw]=\big(-L_{1/2}\tiw+\la
m(x)(\tiw+k),\la\into m(x)(\tiw+k)dx\big).
$$
Now, by Remark \ref{rem:fredholm}, we have that $(\la,0,0)$ can be a bifurcation point
for positive solutions only if there exists a pair $(k,\tiw)$ with $\tiw+k>0$
belonging to the kernel of the operator
$\partial_{(h,\tiu)}G(\la,0,0)$, i.e. such that
$$
\begin{cases}
L_{1/2}\tiw=\la m(x)(\tiw+k),
\medskip\\
\dys \la \into m(x)(\tiw+k)dx=0,
\end{cases}
$$
which is equivalent to say that the function $w(x,y)=\tiw(x,y)+k$
is a positive solution of
\beq\label{lineari}
\begin{cases}
\lapneu w=\la m(x)w,
\medskip\\
\dys \la\into m(x) w\,dx=0.
\end{cases}
\eeq
For this linear eigenvalue problem,
Theorem \ref{primoautopos} shows that there exists only one
positive simple eigenvalue $\la_{1}$ with a positive eigenfunction
$\vfi_{1}$ satisfying \eqref{defla1}. Decomposing $\vfi_{1}$ as
$\tilde\vfi_{1}+c_{\vfi_{1}}=\tilde\vfi_{1}+h_{1}$, we deduce that the kernel of
the operator $\partial_{(h,\tiu)}G(\la_1,0,0)$ is generated by
$(h_1,\tilde\vfi_{1})$. By virtue of Remark \ref{rem:fredholm}, this
implies that the range of the operator $\partial_{(h,\tiu)}G(\la_{1},0,0)$  is closed and that
it has codimension  one. Such range consists in the pairs
$(\tiw,t)$ such that there exists a solution
$z(x,y)=\tiz(x,y)+h$ of the problem
$$
\begin{cases}
\lapneu z=\la_{1} m(x)z+w,
\medskip\\
\dys \la_{1}\into m(x)z \,dx=t.
\end{cases}
$$
Taking as test function $\psi(x,y)=\tiv_{1}(x,y)$
in the weak formulation of the first equation
we derive that the range is given by
$$
\left\{(w,t)\in  Y\; \text{such that}\; \dys\into w(x) \vfi_{1}(x)\,dx=0\right\}.
$$
Deriving \eqref{Gderla1} and \eqref{Gderla2}  with respect to $\la$ leads to
$$
\partial_{\la}\partial_{(h,\tiu)}G(\la,h,u)[l,k,\tiw]=\big( f'_{s}(x,\tiu+h)(\tiw+k),
\into f'_{s}(x,\tiu+h)(\tiw+k)dx\big)
$$
and, denoting with $M$ the operator 
$\partial_{\la}\partial_{(h,\tiu)}G(\la,h,u)[\la_{1},0,0]$ we have that
$$
M(h_{1},\tivfi)=\big( m(x)(\tivfi+h_{1}),
\into m(x)(\tivfi+h_{1})dx\big)=\big( m(x)\vfi_{1},
\into m(x)\vfi_{1} dx\big).
$$
At this point, in order to apply the aforementioned theorem from \cite{ampr},
we only have to check that $M(h_{1},\tivfi)$ does not belong to the range of 
$\partial_{(h,\tiu)}G$,
and this occurs because
$$
\into  m(x)(\tivfi(x)+h_{1})(\tivfi(x)+h_{1})dx=\into
m(x)\vfi_{1}^{2}dx=\frac1{\la_{1}}\intc |\nabla \vfi_{1}|^{2}= 1.
$$
Then at $(\la_{1},0,0)$ a bifurcation occurs. Moreover,
as $f(x,s)=s(m(x)-s)$ is of class $C^{2}$ with respect to $s$,
the set of the nontrivial solution of $G(\la,\tiu,h)=0$
near $(\la_{1},0,0)$ is a unique $C^{1}$ cartesian curve,
parameterized by
\beq\label{locfun}
\la =\la_{1}+\mu(t),
\quad  h =th_{1}+\beta(\la_{1}+\mu(t),t\tivfi),
\quad \tiu =t\tivfi+\gamma(\la_{1}+\mu(t),t\tivfi)
\eeq
for $t\in (-\eps,\eps)$, $t\neq 0$. Here both $\gamma(\la_{1}+\mu(t),t\tivfi)$ and
$\beta(\la_{1}+\mu(t),t\tivfi)$ are $o(t)$ as $t\to0$, while a direct computation
shows that $\mu'(0)>0$. Thus, for sufficiently small  $t>0$, it is possible to write
$t=t(\lambda)$, and the solution $(\la,h,\tiu)$ is positive.
\edim
Coming to the case of positive mean of the function $m$, it is more convenient
to use as a bifurcation parameter $h$ instead of $\lambda$.
\bpr\label{bifurcation2}
Assume
\beq\label{mediapos}
\into m(x)dx>0,
\eeq
and let $h^*$ be defined as
\beq\label{defh}
h^{*}=\slashint_\Omega m(x)dx.
\eeq
Then $(0,h^{*},0)$ is a bifurcation point
of positive solutions of Problem \eqref{logim2} from $\mathT_2$,
and it is the only one. Moreover, locally near such point, $\mathS$ is
a unique $C^1$ cartesian curve, parameterized by $\lambda\in(0,\delta)$,
for some $\delta>0$.
\epr
\bdim
The derivative of $G$ with respect to $(\la,\tiu)$ has
components
\begin{align}\label{Gderh1}
\partial_{(\la,\tiu)}G_{1}(\la,h,\tiu)[l,\tiw] &=-L_{1/2}\tiw+\la
f'_{s}(x,\tiu+h)\tiw+
lf(x,\tiu+h)
\\
\label{Gderh2}
\partial_{(\la,\tiu)}G_{2}(\la,h,\tiu)[l,\tiw] &=\la\into f'_{s}(x,\tiu+h)\tiw dx+l\into f(x,\tiu+h)dx,
\end{align}
so that a pair $(l,\tiw)$ belongs to the kernel of
$\partial_{(\la,\tiu)}G(0,h,0)$ if and only if $(l,\tiw)$ solves the problem
\beq\label{line2}
\begin{cases}
-L_{1/2}\tiw+lf(x,h)=0,
\medskip\\
\dys l\into f(x,h)dx=0.
\end{cases}
\eeq
For $l=0$, taking into account Corollary \ref{coro:smooth_L}
we find $\tiw=0$, while for $l\neq 0$ the mean of
$ f(x,h)$ has to be zero and this, thanks to \eqref{mediapos}, 
yields the positive value for $h^{*}$ given by \eqref{defh}.
With this choice of $h^{*}$, for any $l$ there exists a unique solution 
of the first equation. Denoting with  $\tiz^{*}$ the one corresponding to 
$l=1$, we obtain that the kernel of $\partial_{(\la,\tiu)}G(0,h,0)$ is
the one dimensional space generated by the pair $(1,\tiz^{*})$.
\\
On the other hand, pair $(w,t)$ belongs to the range of
$\partial_{(\la,\tiu)}G(0,h^{*},0)$ if and only if there exists a solution $(\tiv,l)$ of the problem
$$
\begin{cases}
L_{1/2}\tiv=lf(x,h^{*})+w,
\medskip\\
\dys l\into f(x,h^{*})dx=t.
\end{cases}
$$
Since the function $f(x,h^{*})$ has zero mean,  $t$ has to be zero
and the range is given by the set
$\left\{(w,t)\in  Y ,\,\text{such that}\; t=0\right\}$
which is closed and of codimension one. De\-ri\-ving \eqref{Gderh1} and \eqref{Gderh2}
with respect to $h$ leads to
\begin{align*}
\partial_{h}\partial_{(\la,\tiu)}G_{1}(\la,h,u)[l,\tiw] &=\la
f^{''}_{s}(x,\tiu+h)\tiw+lf'_{s}(x,\tiu+h),
\\
\partial_{h}\partial_{(\la,\tiu)}G_{2}(\la,h,u)[l,\tiw] &
=\la\into f^{''}_{s}(x,\tiu+h)\tiw dx+l\into f'_{s}(x,\tiu+h)dx.
\end{align*}
This time we obtain the operator
$N=\partial_{h}\partial_{(\la,\tiu)}G(0,h^{*},0)$, which computed on
$(1,\tiz^{*})$ gives
$$
N(1,\tiz^{*})=\Big(f'_{s}(x,h^{*}), \into f'_{s}(x,h^{*}) dx \Big)
$$
If the second component of $N(1,\tiz^{*})\neq 0$ then
$N(1,\tiz^{*})$ does not belong to the range of
$\partial_{(\la,\tiu)}G(0,h^{*},0)$ implying that
bifurcation occurs in this case too; and this is true
since \eqref{defh} yields
$$
N(1,\tiz^{*})=\Big(m(x)-2\slashint m(x)dx, -\into m(x)dx\Big).
$$
As before, using \cite{ampr}, Chapter 5, Theorem 4.1, we deduce the existence of a
cartesian curve in a neighborhood of $(0,h^{*},0)$ with representation
$$
h =h^{*}+\nu(t),
\quad
\la =t+\alpha(h^{*}+\nu(t),tz^{*}),
\quad \tiu =tz^{*}+\gamma(h^{*}+\nu(t),tz^{*}),
$$
for $t\in (-\eps,\eps)$, $t\neq 0$. Here both $\alpha(h^{*}+\nu(t),tz^{*})$ and
$\gamma(h^{*}+\nu(t),tz^{*})$ are $o(t)$ as $t\to0$, while $\nu(0)=0$.
Since $h^*$ is positive and also $\lambda$ is positive for $t$ positive and small,
the proposition easily follows.
\edim
\bos
We stress the fact that both in Proposition \ref{bifurcation1} and in Proposition 
\ref{bifurcation2} we can locally parameterize $\mathS$ with respect to $\lambda$, 
even though in the latter the bifurcation
parameter is $h$.
\eos
\bos
If \eqref{ipompos} holds we can go through
the proof of Proposition \ref{bifurcation1}
and use Remark \ref{autopos}
to obtain that $\la_{1}<0$ is bifurcation point of
positive solutions of \eqref{logim2}
with $\la<0$ and $u=\tiu+h$ nonnegative.
Moreover, as in the case of $\la_{1}>0$, Lemma \ref{lambda}
implies that the bifurcation
occurs on the right hand side of $\la_{1}$.
Finally, let us notice that, in order to show the local bifurcation from $(0,h^{*},0)$, it
is enough to assume \eqref{mediapos} and $m$ needs not to be sign-changing.
\eos
Note that, by Proposition \ref{inverse}, it is possible to reformulate the equation 
$G=0$ in terms of a identity minus compact map, see also Remark \ref{rem:fredholm}. 
Then a classical result due to Rabinowitz \cite{Rab} implies that the continuum 
bifurcating either from $(\la_{1},0,0)$ or from $(0,h^{*},0)$ is actually global. 
Here we prefer to recover this result from a stronger one: indeed we are going 
to show that the set $\mathS$ of positive solutions
is a smooth arc.
\ble\label{prolungo}
Let $ (\la_{0},\tiu_{0},h_{0})\in\mathS$. Then there exist $\mathcal{U}\in\R\times\R\times X$
neighborhood of $ (\la_{0},\tiu_{0},h_{0})$, $\delta>0$ and a $C^{1}$ map 
$\Psi\from (\la_{0}-\delta,\la_{0}+\delta)\to \R\times X$ such that
\[
\mathS\cap\mathcal{U} = \left\{(\la,\Psi_{1}(\la),\Psi_{2}(\la)):\la\in(\la_{0}-
\delta,\la_{0}+\delta)\right\}.
\]
\ele
\bdim
The conclusion will follow from the application of the Implicit Function Theorem
to the map $G(\la,h,\tiu)$ defined in \eqref{defG}. To this aim, taking into account
\eqref{Gderla1}, \eqref{Gderla2}, we want to show the invertibility of the operator
\[
\partial_{(h,\tiu)}G(\la_{0},h_{0},\tiu_{0})[t,\tiz]=
\left(
-L_{1/2} \tiz-\la_{0}(m-2u_{0})(\tiz+t),
\la_0\into (m-2u_{0})(\tiz+t)
\right).
\]
We claim that, for such operator, the Fredholm Alternative holds. Reasoning as in
Remark \ref{rem:fredholm}, to obtain the claim it is enough to show that
$\into (m(x)-2u_{0}) \neq 0$. But this can be easily obtained by testing the equation
for $u_0$ with $1/v_0$, where as usual 
$u_0(x)=\tilde u_0(x) + h_0 = \tilde v(x,0) + h_0 = v(x,0)$:
\[
\int_\Omega (m(x)-2u_{0})dx < \into (m(x)-u_{0})dx = - \int_{\mathC}
\left|\frac{\nabla v_0}{v_0}\right|^2dxdy.
\]
Once the Fredholm Alternative is established, we have that
$\partial_{(h,\tiu)}G(\la_{0},h_{0},\tiu_{0})$ is invertible if and only
if its kernel is trivial. In turn, $(t,\tiz)$ belongs to the kernel
if and only if $z=\tiz+t$ solves the problem
$$
\begin{cases}
\dys \lapneu z=\la_{0}(m(x)-2u_{0})z,
\medskip\\
\dys \la_0\into (m(x)-2u_{0})z=0.
\end{cases}
$$
Taking $\psi(x,y)=w^{2}(x,y)/v_{0}(x,y)$ as test function
in the equation satisfied by $v_{0}$, where $w$ is the harmonic extension
of $z$, we obtain
$$
\intc \!\!\!\nabla v_{0}(x,y)\left[2\frac{w}{v_{0}}\nabla w(x,y)
-\left(\frac{w}{v_{0}}\right)^{2}\nabla v_{0}(x,y)\right]dxdy=
\la_{0}\into \!\!\!\!\left(m(x)-u_{0}(x)\right)z^{2}(x)dx.
$$
Then we can test the equation for $w$ with $w$ itself, and subtract it from the equation above.
We obtain
$$
0\leq-\into \Big|\nabla w(x,y)-\frac{w(x,y)}{v_{0}(x,y)}\nabla v_{0}(x,y)\Big|^{2}dxdy
=\la_{0}\into u_{0}(x)z^{2}(x)dx\leq0,
$$
which implies that $z$, and then $(t,\tiz)$, must vanish.
\edim

\bte\label{global} Let $\mathS$ be as in Definition \ref{defi:bif_sets}. Then
\begin{itemize}
\item[(i)] if \eqref{ipomneg}  holds then ${\mathcal S}$ is the graph of a $C^1$ map
$\Psi\from (\lambda_{1},+\infty)\to \R\times X$, with $\Psi(\lambda_1^+)=(0,0)$;
\item[(ii)] if \eqref{mediapos}  holds then ${\mathcal S}$ is the graph of a $C^1$ map
$\Psi\from (0,+\infty)\to \R\times X$, with $\Psi(0^+)=(h^*,0)$.
\end{itemize}
\ete
\bdim
To start with, we prove that $\mathS$ contains such a graph. Let us 
assume condition \eqref{ipomneg}, and let us define
$$
\Lambda:=\sup_{\la>\la_{1}}\left\{ \exists\, \Psi\in C^1((\la_{1},\la),\R\times X), \;
\mathrm{graph}(\Psi)\subset\mathS \right\}.
$$
Proposition \ref{bifurcation1} and Lemma \ref{lambda}
imply that $\Lambda>\la_{1}$, let us suppose by contradiction that
$\Lambda<+\infty$,  and consider a cartesian curve
$\Psi\from (\la_{1},\Lambda)\to \R\times X$, defined by
$\Psi(\la)=(\Psi_{1}(\la),\Psi_{2}(\la))$, with
$(\la,\Psi_{1}(\la), \Psi_{2}(\la))\in\mathS$.
Let us consider a sequence $\la_{n}< \Lambda$ tending to  $\Lambda$
with corresponding solutions $(\la_{n},h_{n},\tiu_{n})$, where
$h_{n}=\Psi_{1}(\la_{n})$, $\tiu_{n}=\Psi_{2}(\la_{n})$,  and
$u_{n}=\tiu_{n}+h_{n}$. Moreover, let us recall that we can write
$\tiu_{n}(x)=\tiv_{n}(x,0)$ and $v_{n}(x,y)= \tiv_{n}(x,y)+h_{n}$.
Taking as test function  in \eqref{logim2} $\psi(x,y)=v_{n}(x,y)$
and applying Lemma \ref{limitate}, we immediately infer the
uniform bound
\beq\label{hstima}
\|\tiv_{n}\|_{\huno}\leq L,
\eeq
from which we deduce that $\tiu_{n}$ is uniformly bounded in
the spaces $L^{p}(\Omega)$ with $1\leq p\leq 2N/(N-1)$.
Since $G_{2}(\la_{n},h_{n},\tiu_{n})=0$, where $G_{2}$ is defined in
\eqref{defG}, and  as $\tiu_{n}$ has zero mean, $h_{n}$ has to be 
positive and we obtain
$$
h_{n}^{2}|\Omega|
\leq h_{n}\left[\into m(x)dx-2\into \tiu_{n}(x)dx\right] +L
\leq cL+h_{n}M|\Omega|,
$$
for $c$ positive constant and $M$ defined in Lemma \ref{limitate}.
Hence, also $h_{n}$ is bounded and there exists $h\geq 0$ such that,
up to subsequences, $h_{n }\to h$, and $u_{n}=\tiu_{n}+h_{n}\to \tiu+h\geq 0$.
From Proposition \ref{maxi} we have two possibilities, either $u>0$ or
$u\equiv 0$. In the first case we have obtained a positive solution of
\eqref{logim2} with $\la=\Lambda$ and Lemma \ref{prolungo}
provides a contradiction with the definition of $\Lambda$. In the second
case, $\Lambda$ turns out to be a local bifurcation point for positive
solutions, but then  Proposition \ref{bifurcation1}  implies
that $\Lambda=\la_{1}$ which is again a
contradiction, showing that $\Lambda=+\infty$.
\\
When \eqref{mediapos} is assumed we define
$$
\Lambda:=\sup_{\la>0}\left\{ \exists\, \Psi\in C^1((0,\la),\R\times X), \;
\mathrm{graph}(\Psi)\subset\mathS \right\}.
$$
Then $\Lambda>0$ by Proposition \ref{bifurcation2}, and arguing as above
we obtain that also in this case $\Lambda=+\infty$.
\\
Finally, we are left to show that $\mathS\setminus\mathrm{graph}(\Psi)$ is empty. 
We prove it assuming \eqref{ipomneg}, when \eqref{mediapos} holds the same 
conclusion can be obtained with minor changes. Let us argue by contradiction 
and suppose that there exists $\la^{*}$
with distinct positive solutions $(\la^*,h_{1},\tiu_{1})$
and $(\la^{*},h_{2},\tiu_{2})$. Arguing as above, it is possible to
see that $(\la^*,h_{1},\tiu_{1})$  and $(\la^{*},h_{2},\tiu_{2})$
belong respectively to global branches ${\mathcal S_{1}}$
and ${\mathcal S_{2}}$ of positive solutions that can be parameterized
by cartesian curves $\Psi_{1}, \Psi_{2}
\from [\la_{1},+\infty)\to \R\times X$. Notice that
${\mathcal S_{1}}\cap {\mathcal S_{2}}=\emptyset$ and
neither ${\mathcal S_{1}}$ nor ${\mathcal S_{2}}$ may have
turning points, otherwise  Lemma \ref{prolungo} would be contradicted.
As a consequence $\la_{1}$ is a multiple bifurcation point
of positive solutions, but this is in contradiction with the local representation
provided in \eqref{locfun}.
\edim
\begin{corollary}
There exists exactly one positive solution $u(x)=\tiu(x)+h$ associa\-ted to 
any $\la>\la_{1}$ when \eqref{ipomneg} holds,
and there exists exactly one positive solution
$\tiu+h$ for every $\la>0$ when \eqref{mediapos} holds.
\end{corollary}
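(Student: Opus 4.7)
The plan is to deduce the corollary as an essentially immediate consequence of Theorem \ref{global}, since the graph representation of $\mathcal{S}$ bundles together the existence and the uniqueness claims. The only task is to unpack the statement ``$\mathcal{S}$ is the graph of a $C^{1}$ map $\Psi$'' into the existence-and-uniqueness language of the corollary.

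First, assume \eqref{ipomneg} and fix $\lambda>\lambda_{1}$. By part (i) of Theorem \ref{global}, $\Psi\from(\lambda_{1},+\infty)\to\R\times X$ is defined at $\lambda$, so setting $(h,\tiu):=(\Psi_{1}(\lambda),\Psi_{2}(\lambda))$ produces a triple $(\lambda,h,\tiu)\in\mathS$; by Definition \ref{defi:bif_sets} the corresponding $u=\tiu+h$ is strictly positive on $\overline{\Omega}$ and solves \eqref{logim2}, proving existence. Uniqueness follows directly from $\mathS=\mathrm{graph}(\Psi)$: any second positive solution would furnish a triple $(\lambda,h',\tiu')\in\mathS$, which by the graph property must coincide with $(\lambda,\Psi_{1}(\lambda),\Psi_{2}(\lambda))$, so $(h',\tiu')=(h,\tiu)$ and hence the two solutions agree.

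The case \eqref{mediapos} is identical, invoking part (ii) of Theorem \ref{global}: for each $\lambda>0$ the single pair $\Psi(\lambda)\in\R\times X$ yields the unique positive solution $u=\Psi_{2}(\lambda)+\Psi_{1}(\lambda)$. There is no real obstacle here: the ``uniqueness per $\lambda$'' content is already encoded in the word \emph{graph}, and the fact that any nonnegative weak solution lies in $\mathS$ (so nothing escapes the graph) was established in Corollary \ref{coro:smooth_sols} and built into Definition \ref{defi:bif_sets}. Thus the proof reduces to citing Theorem \ref{global} in each of the two cases.
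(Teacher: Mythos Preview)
Your proof is correct and follows exactly the paper's approach: the paper's proof is simply ``This is an evident consequence of Theorem \ref{global}'', and you have merely unpacked this citation into explicit existence and uniqueness statements via the graph property of $\mathS$.
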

\bdim
This is an evident consequence of Theorem \ref{global}.
\edim
Taking into account Remark \ref{rem:casi_banali} we have that the only case
left uncover by Theorem \ref{global} is when $m$ has zero mean but it is not identically zero.
Notice that in such case the candidate bifurcation point is the origin, but it is not
possible to argue as in the previous results, as
the mixed derivatives $\partial_{\la}\partial_{(h,\tiu)}G(0,0,0)$,
$\partial_{h}\partial_{(\la,\tiu)}G(0,0,0)$  are now  both trivial.
Nevertheless, we can still prove the existence of
a solution for every $\la>0$ arguing by approximation.
\bte\label{media zero}
Assume that $m$ is a Lipschitz function not identically zero and satisfying
$$
\into m(x)dx=0.
$$
Then ${\mathcal S}$  is the graph of a $C^1$ map
$\Psi\from (0,+\infty)\to \R\times X$, with $\Psi(0^+)=(0,0)$.
\ete
\bdim
Let us choose $n_{0}>1$ such that for every  $n>n_{0}$  the weight
\begin{equation*}\label{mn}
m_{n}(x)=m(x)-\frac1n
\end{equation*}
satisfies hypothesis \eqref{ipomneg}. Letting $\mathcal{M}_n$ be defined
as in \eqref{defM}, with $m_{n}$ in the place of $m$, Theorem \ref{primoautopos}
yields the existence of a first positive eigenvalue
\[
\la_{1,n} = \inf_{v(x,0)\in\mathcal{M}_n}\int_{\mathC}|\nabla v(x,y)|^2dxdy
\]
associated to the weight $m_{n}(x)$. Let us define
\[
u_n(x) := p_n m(x) + q_n,\qquad\text{where }p_n:=\frac{2}{\sqrt{n}\int_\Omega m^2}.
\]
We claim that, when $n$ is sufficiently large, $q_n$ can be chosen in such a way that
$u_n\in\mathcal{M}_n$, that is,
\[
\int_\Omega m_n(x) (p_n m(x) + q_n)^2dx=1.
\]
Indeed, since $\into m^2 = \into mm_n$, by direct calculations the above 
equation can be rewritten as
\[
q_n^2-4\sqrt{n}\,q_n + n - \frac{4\into m^2m_n}{\left(\into m^2\right)^2}=0,
\]
which solvability is equivalent to the condition
\[
3n +  \frac{4\into m^2m_n}{\left(\into m^2\right)^2} \geq0,
\]
trivially satisfied for $n$ large. With this choice of $q_n$ we have that, denoting
with $\hat m$ the Neumann harmonic extension of $m$ and writing 
$v_n(x,y)= p_n \hat m(x,y) + q_n$,
it holds $v_n(x,0)=u_n(x)$. This implies
\[
\la_{1,n} \leq \int_{\mathC}|\nabla v_n(x,y)|^2dxdy 
= p_n^2 \int_\mathC |\nabla \hat m(x,y)|^2dxdy,
\]
yielding $\lambda_{1,n}\to 0$ as $n\to\infty$.
\\
Now, Theorem \ref{global} provides a sequence
of $C^{1}$ functions $\Psi_{n}\from [\la_{1,n},+\infty)\to \R\times X$
with $\Psi_{n}(\la)=(h_{n},\tiu_{n})$  positive solution of \eqref{logim2}
with weight $m_{n}(x)$.
Let us fix $0<\delta<\Lambda$ and $n_{1}>n_{0}$ such that $\la_{1,n}<\delta$ for every
$n\geq n_{1}$, so that $\Psi_{n}$ is defined in $[\delta,\Lambda]$ for every $n\geq n_{1}$. 
Using Lemma \ref{limitate} and Proposition \ref{prop:reg_inv},
we obtain that $(h_{n},\tiu_{n})$ is uniformly bounded in $\R\times X$, so that up
to a subsequence $(h_{n},\tiu_{n})$ converges in $ \R\times\huno$ to
a pair $(h,\tiu)$ solution of \eqref{logim2}; moreover, the same a priori
bounds implies that $\Psi_{n}$ satisfies the hypotheses of Ascoli-Arzel\`a
Theorem in the closed, bounded interval $[\delta,\Lambda]$,
yielding the existence of a continuous function $\Psi:[\delta,\Lambda]$ such
that $\Psi_{n}$ converges to $\Psi$ uniformly and $\Psi(\la)=(h,\tiu)$.
By the arbitrariness of $\delta$ and $\Lambda$,
we have that $\Psi$ is defined in the whole interval $[0,+\infty)$, and
$\Psi(0)=(0,0)$.

The only thing left to show is that $\Psi(\la)\neq 0$ for ever $\la>0$. Let us
argue by contradiction and suppose that there exists $\la>0$ such that
$\Psi_{n}(\la)=(h_{n},\tiu_{n})\to (0,0)$. As usual, let $u_{n}=\tiu_{n}+h_{n}$ and
$v_n=\tiv_{n}+h_{n}$ be such that $v_n(x,0)=u_n(x)$. Setting
$z_{n}=u_{n}/\|\tiv_{n}\|_{\acca^{1}(\mathC)}$,  $w_{n}=v_{n}/\|\tiv_{n}\|_{\acca^{1}(\mathC)}$,
we obtain
$$
\intc \nabla w_{n}\nabla \psi
=\la\into z_{n}(m_{n}(x)-u_{n}) \psi,\qquad
\into z_{n}(m_{n}(x)-u_{n})=0,
$$
for every test function $\psi$.
Passing to the limit we obtain
$$
\intc \nabla w\nabla \psi =\la\into m(x)z \psi,\qquad
\into m(x)z=0,
$$
which is equivalent to say that the nontrivial function $z$ is a nonnegative 
eigenfunction associated to the positive eigenvalue $\la$, but as $m$ has 
zero mean value this contradicts Lemmas \ref{infty}, \ref{infty2}.
\edim

\begin{figure}
\includegraphics[width=0.48\textwidth]{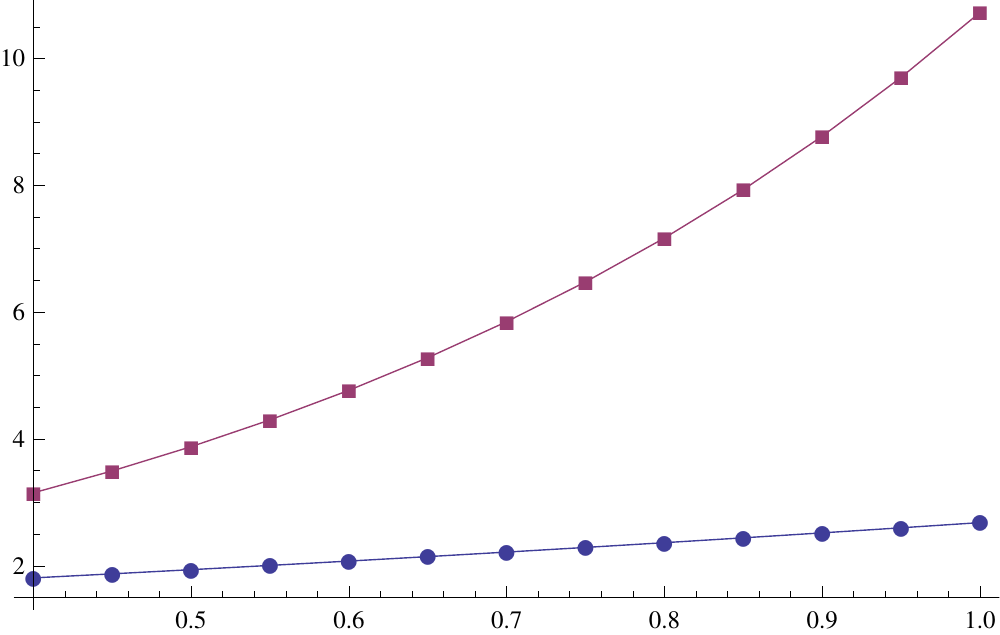}
\includegraphics[width=0.48\textwidth]{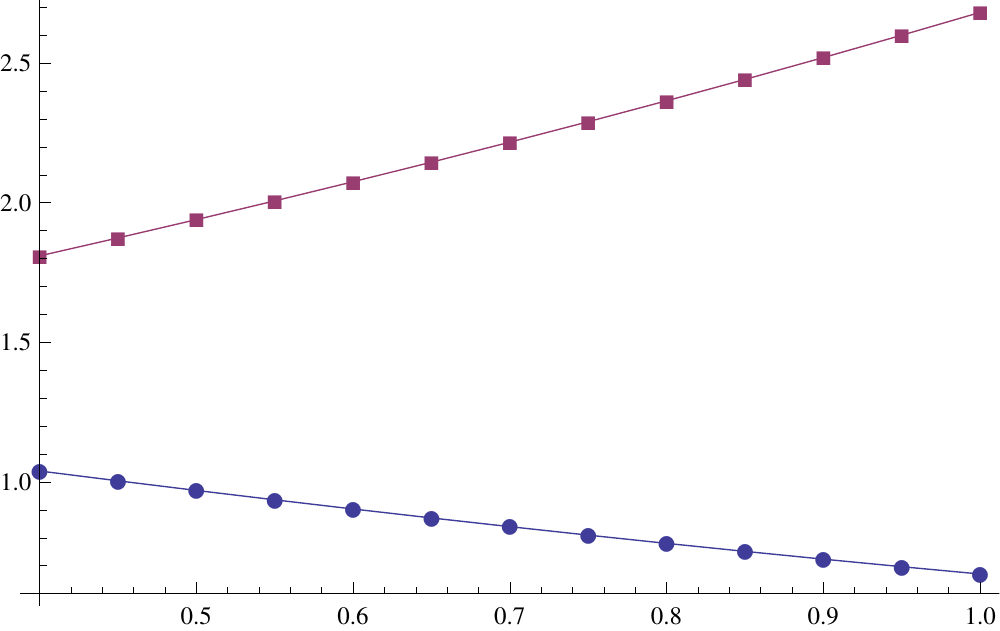}\\
\includegraphics[width=0.48\textwidth]{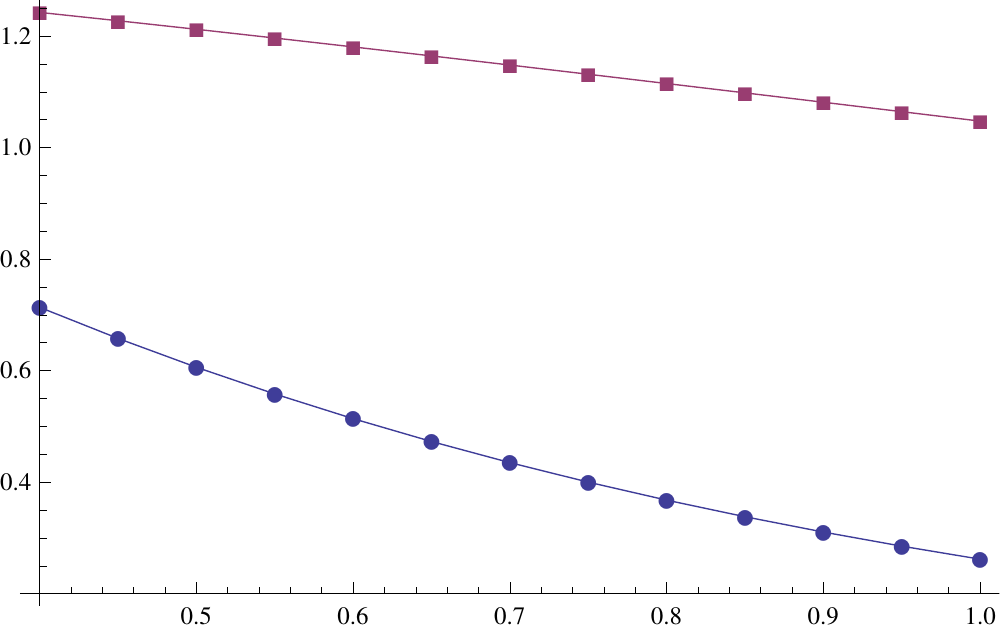}
\includegraphics[width=0.48\textwidth]{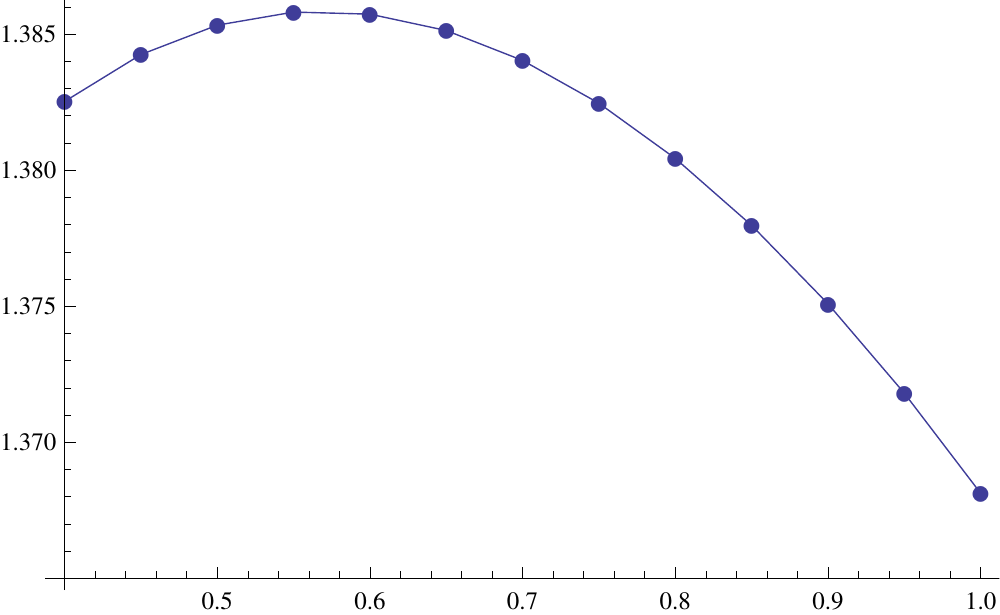}
\caption{graphs of $\Lambda(s,m_1,(0,L))$ (dots) and  $\Lambda(s,m_2,(0,L))$ (squares)
as functions of $s\in[0.4,1]$, for $L=2.5$ (top left), $L=5$ (top right), $L=8$ (bottom left).
At bottom right, the graph of $\Lambda(s,m_1,(0,L))$ for $L=3.5$.}
\label{fig:fourier}
\end{figure}
As we mentioned in the introduction, a relevant question is the one of comparing the two eigenvalues
\[
\begin{split}
\la_{1}(m,\Omega) &= \inf\left\{\int_{\mathC}|\nabla v(x,y)|^2dxdy : 
\int _\Omega m(x) v^2(x,0)dx=1,
\int _\Omega m(x) v(x,0)dx=0\right\},\\
\overline{\mu}_{1}(m,\Omega) &= \inf\left\{\int_{\Omega}|\nabla u(x)|^2dx : 
\int _\Omega m(x) u^2(x)dx=1,
\int _\Omega m(x) u(x)dx=0\right\},
\end{split}
\]
which correspond to the linearized version of \eqref{logim} and 
\eqref{deltalog}, respectively. We conclude this
section showing some simple numerical results in dimension $N=1$. Using
the usual Fourier representation with basis defined in \eqref{autof}, we have that
\[
\begin{split}
mu &= \sum_{i}\left[\int_\Omega m(x)\left(\sum_{j}u_j\phi_j(x)\right)\phi_i(x)dx \right]\phi_i\\
   &= \sum_{i}\left[\sum_{j}u_j\left(\int_\Omega m(x)\phi_j(x)\phi_i(x)dx \right)\right]\phi_i
   := \sum_{i}\left[\sum_{j} M_{ij}u_j\right]\phi_i.
\end{split}
\]
Under this point of view, solving the above minimization problems amounts 
to finding the smallest positive eigenvalue
$\Lambda(s,m,\Omega)$ of the problem
\[
\mathrm{diag}\left(\mu_i^{s}\right)_{i\geq0} \mathbf{u} = \Lambda M \mathbf{u},
\]
indeed $\la_{1}(m,\Omega)=\Lambda(1/2,m,\Omega)$ and 
$\overline{\mu}_{1}(m,\Omega)=\Lambda(1,m,\Omega)$.
In turn, such eigenvalue can be easily approximated by truncating the 
Fourier series. In Figure \ref{fig:fourier} we report these approximations 
in the cases $\Omega=(0,L)$ and
\[
m_1(x)= \cos\left(\frac{\pi}{L}x\right) - \frac12,\qquad m_2(x)
= \cos\left(\frac{2\pi}{L}x\right) - \frac12.
\]
Hence $m$ has always mean equal to $1/2$, while
\[
\mu_1(L)=\frac{\pi^2}{L^2}.
\]
We observe that in the case $L=2.5<\pi$ then $\mu_1>1$, and
 thus $\Lambda$ is increasing in $s$ for any choice of $m$
as one can trivially prove. On the other hand, when $\mu_1<1$ the 
situation is more variegated. In any case,
the eigenvalue corresponding to $m_1$ is always lower than the one 
corresponding to $m_2$, in agreement 
with the results obtained for similar weights in the case of the standard 
Laplacian in \cite{caco}. 

\bibliography{fracbib}
\bibliographystyle{abbrv}

Eugenio Montefusco,\\
Dipartimento di Matematica, \\
{\it Sa\-pien\-za} Universit\`a  di Roma,\\
p.le Aldo Moro 5, 00185 Roma, Italy.\\
E-mail address: {\tt montefusco@mat.uniroma1.it}
\medskip

Benedetta Pellacci,\\
Dipartimento di Scienze Applicate,\\
Universit\`a degli Studi di Napoli {\it Parthenope},\\
Centro Direzionale Isola C4, 80143 Napoli, Italy.\\
E-mail address: {\tt pellacci@uniparthenope.it}
\medskip

Gianmaria Verzini\\
Dipartimento di Matematica,\\
Politecnico di Milano,\\
p.za Leonardo da Vinci 32, 20133 Milano, Italy.\\
E-mail address: {\tt gianmaria.verzini@polimi.it}

\end{document}